\documentclass[11pt]{amsart}

\usepackage[margin=1in]{geometry}
\usepackage{enumerate, amsmath, amsthm, amsfonts, amssymb, mathrsfs, graphicx, paralist, ulem}
\usepackage[usenames, dvipsnames]{color}
\usepackage{breqn} 
\usepackage{tikz-cd}
\usepackage{hyperref}

\numberwithin{equation}{section}
\newtheorem{Theorem}[equation]{Theorem}
\newtheorem{Proposition}[equation]{Proposition}
\newtheorem{Lemma}[equation]{Lemma}

\newtheorem{Conjecture}[equation]{Conjecture}

\theoremstyle{definition}
\newtheorem{Remark}[equation]{Remark}

\newtheorem{eg}[equation]{Example}

\newcommand{\cA}{\mathcal{A}}

\newcommand{\bC}{\mathbb{C}}

\newcommand{\cG}{\mathcal{G}}

\newcommand{\cM}{\mathcal{M}}

\newcommand{\bN}{\mathbb{N}}
\newcommand{\cN}{\mathcal{N}}

\newcommand{\cO}{\mathcal{O}}

\newcommand{\cP}{\mathcal{P}}

\newcommand{\cW}{\mathcal{W}}

\newcommand{\cY}{\mathcal{X}}

\newcommand{\bc}{\mathbf{c}}

\newcommand{\fg}{\mathfrak{g}}

\newcommand{\bu}{\mathbf{u}}

\newcommand{\bv}{\mathbf{v}}


\newcommand{\CC}{\mathbb{C}}

\newcommand{\OO}{\mathbb{O}}

\newcommand{\ZZ}{\mathbb{Z}}

\newcommand{\Matnxn}{\rm{Mat}_{n}}

\renewcommand{\phi}{\varphi}

\newcommand{\ol}[1]{\overline{#1}}

\newcommand{\leftexp}[2]{\vphantom{#2}^{#1} #2}

\newcommand{\arxiv}[1]{\href{http://arxiv.org/abs/#1}{{\tt arXiv:#1}}}

\makeatletter
\def\Ddots{\mathinner{\mkern1mu\raise\p@
\vbox{\kern7\p@\hbox{.}}\mkern2mu
\raise4\p@\hbox{.}\mkern2mu\raise7\p@\hbox{.}\mkern1mu}}
\makeatother

\DeclareMathOperator{\im}{Im}

\renewcommand{\hom}{\operatorname{Hom}}

\DeclareMathOperator{\Spec}{Spec}


\newcommand{\Gr}{\mathbf{Gr}}

\newcommand{\fsl}{\mathfrak{sl}}

\newcommand{\suchthat}{\mid}

\renewcommand{\Gr}{\cG r}

\newcommand{\wt}{\text{wt}}

\newcommand{\bfnG}{\mathbf{G}}
\newcommand{\bfnN}{\mathbf{N}}

\begin{document}

\title{}
\author{Joel Kamnitzer}
\address{J.~Kamnitzer: Department of Mathematics, University of Toronto, Canada}
\email{jkamnitz@math.toronto.edu}

\author{Dinakar Muthiah}
\address{D.~Muthiah: Department of Mathematical and Statistical Sciences, University of Alberta, Canada}
\email{muthiah@ualberta.ca}

\author{Alex Weekes}
\address{A.~Weekes: Perimeter Institute for Theoretical Physics, Canada}
\email{aweekes@perimeterinstitute.ca}

\author{Oded Yacobi}
\address{O.~Yacobi: School of Mathematics and Statistics, University of Sydney, Australia}
\email{oyacobi@maths.usyd.edu.au}
\title{Reducedness of affine Grassmannian slices in type A}

\begin{abstract}
We prove in type A a conjecture which describes the ideal of transversal slices to spherical Schubert varieties in the affine Grassmannian.  As a corollary, we prove a modular description (due to Finkelberg-Mirkovic) of the spherical Schubert varieties.
\end{abstract}

\maketitle

\section{Introduction}
\subsection{The Reducedness Conjecture}

 Let $ G $ be a complex semisimple group and consider the associated  thick affine Grassmannian $\Gr=G((t^{-1}))/G[t]$.  There is a Poisson structure on $G((t^{-1}))$ arising from the Manin triple $(\fg((t^{-1})),\fg[t],t^{-1}\fg[[t^{-1}]])$.  Hence $G[t]$ is a Poisson subgroup of $G((t^{-1}))$, and this coinduces a Poisson structure on $\Gr$.

 For a dominant coweight $\lambda$ of $G$ consider the $G[t]$-orbit given by $\Gr^\lambda=G[t]t^\lambda$.  Note that the thin affine Grassmannian $G[t,t^{-1}]/G[t]$ is isomorphic to  the union $\bigcup \Gr^\lambda$ over all dominant coweights.

Given a pair of dominant coweights such that $\mu \leq \lambda$ we have that $\Gr^\mu \subset \overline{\Gr^\lambda}$.  Our main objects of interest are transversal slices to $\Gr^\mu$ inside $\ol{\Gr^\lambda}$, which we denote $\Gr_\mu^\lambda$.  These slices arise in several contexts:
\begin{enumerate}
\item By the Geometric Satake Correspondence, the intersection homology of   $\Gr_\mu^\lambda$ can be identified with the associated graded of $V(\lambda)_\mu$, the $\mu$ weight space of the irreducible $^LG$ module of highest weight $\lambda$ \cite{L},\cite{G},\cite{MV}.
\item $\Gr_\mu^\lambda$ is a conical Poisson subvariety of the affine Grassmannian with respect to loop rotation \cite{KWWY}.

\item By recent work of Braverman, Finkelberg, and Nakajima \cite{BFN2}, these slices are the Coulomb branches of certain $3d$ $\cN=4$ quiver qauge theories.
\item Closely related to the previous item, the slices in the affine Grassmannian are conjectured to be symplectic dual to corresponding Nakajima quiver varieties \cite{BLPW}.
\end{enumerate}

The transversal slice is constructed as an intersection $\Gr_\mu^\lambda=\Gr_\mu \cap \overline{\Gr^\lambda}$, where $\Gr_\mu$ is an orbit of the congruence subgroup of $G[[t^{-1}]]$ acting on $\Gr$.  The Reducedness Conjecture describes the ideal of $\Gr_\mu^\lambda$ inside $\Gr_\mu$.  More precisely, in \cite{KWWY} a Poisson ideal $J_\mu^\lambda \subset \cO(\Gr_\mu)$ is explicitly defined via generators and it is shown that the vanishing of this ideal is  $\Gr_\mu^\lambda$.  Let $\cY_\mu^\lambda$ be the (possibly non-reduced) scheme whose ideal is $J_\mu^\lambda$.

\begin{Conjecture}[Conjecture 2.20, \cite{KWWY}]
\label{MainConj}
The ideal of $\Gr_\mu^\lambda$ is $J_\mu^\lambda$.  Equivalently, $\cY_\mu^\lambda$ is reduced.
\end{Conjecture}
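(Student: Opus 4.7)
The plan is to reduce reducedness of $\cY_\mu^\lambda$ to a question about transverse slices between nilpotent orbits in $\fgl_N$, using the Mirković-Vybornov isomorphism, which is a type-A-specific tool.

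First, I would invoke the Mirković-Vybornov isomorphism, which (for a suitable $N$ depending on $\lambda$) identifies the reduced variety $\Gr_\mu^\lambda$ with a transverse slice $T_\mu^\lambda$ to a nilpotent orbit $\cO_\mu$ inside the closure $\overline{\cO_\lambda}$ in $\fgl_N$. This supplies an explicit matrix-theoretic model in which equations can be written down concretely. My goal is to upgrade this identification to a \emph{scheme-theoretic} isomorphism $\cY_\mu^\lambda \cong T_\mu^\lambda$, where the right-hand side carries the scheme structure induced from the ambient matrix space.

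Second, I would translate the generators of $J_\mu^\lambda \subset \cO(\Gr_\mu)$, which in the GKLO presentation arise as coefficients of matrix entries of the shifted-Yangian action, into polynomial conditions on matrix entries in $\fgl_N$. The expectation is that they become the determinantal relations cutting out $\overline{\cO_\lambda}$ together with the equations defining the transverse slice to $\cO_\mu$. This translation is the heart of the argument, and will likely require introducing intermediate algebras (such as parabolic or $W$-algebra-type truncations) to interpolate between the Yangian and matrix presentations.

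Third, I would conclude by invoking the classical result of Kraft-Procesi that nilpotent orbit closures in $\fgl_N$ are reduced (in fact normal) and are cut out by the expected minor conditions, so the transverse slices $T_\mu^\lambda$ are reduced as closed subschemes of matrix space. Combined with the scheme-theoretic identification from the first two steps, this yields reducedness of $\cY_\mu^\lambda$.

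The main obstacle is the second step: the generators of $J_\mu^\lambda$ are written in coordinates adapted to loop rotation and the shifted Yangian, and a priori they could define a non-reduced thickening of $T_\mu^\lambda$. A promising backup, should the direct match prove intractable, is to exploit the $\GG_m$-action by loop rotation: construct a conical flat family over $\AA^1$ degenerating $\cY_\mu^\lambda$ to a combinatorially simpler scheme (ideally monomial or toric) whose reducedness is manifest, and propagate reducedness back to the generic fibre by flatness and upper-semicontinuity of fibre dimension.
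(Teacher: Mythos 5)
Your high-level instinct — reduce reducedness of $\cY_\mu^\lambda$ to reducedness of nilpotent-orbit objects in type A, and then invoke known determinantal equations for the latter — is the same one the paper uses. But as you yourself flag, the heart of your plan (step 2: matching the Yangian/GKLO generators of $J_\mu^\lambda$ against determinantal equations for a Slodowy-type slice under Mirkovi\'c--Vybornov) is entirely open, and it is genuinely hard in the generality you are attempting. The paper does not go this way; it avoids the Slodowy-slice matching problem by making two reductions that you have not identified, and these are where the real content lies.

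First, the paper uses a result from \cite{KMW} (stated here as Proposition~\ref{PropKMW}): if $\cY_0^\lambda$ is reduced, then $\cY_\mu^\lambda$ is reduced for every $\mu \leq \lambda$. This collapses the problem to $\mu = 0$, so on the nilpotent side one need only handle bare orbit closures, never transversal slices between orbits. Second, even for $\mu = 0$ one cannot immediately identify $\cY_0^\lambda$ with a nilpotent orbit closure unless $\lambda \leq n\varpi_1$. The paper handles arbitrary $\lambda$ by an explicit isomorphism $^{SL_n}\cY_0^\lambda \cong {}^{SL_{kn}}\cY_{k\varpi_n}^{\tau(\lambda)}$ (Proposition~\ref{PropCalc}, motivated by the Braverman--Finkelberg--Nakajima Coulomb-branch picture), and then applies Proposition~\ref{PropKMW} again inside $SL_{kn}$. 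In the resulting regime $\lambda \leq N\varpi_1$, the matching with nilpotent-orbit equations is \emph{elementary} and does not require Mirkovi\'c--Vybornov at all: $\cY_0^{N\varpi_1} \cong \cN$ simply via $I + t^{-1}X \mapsto X$, and the generators $\Delta_{C,D}^{(s)}$ become explicit sums of $s\times s$ minors of $X$. The paper then compares these with Weyman's representation-theoretic description of the generators of the nilpotent orbit closure ideal, which is considerably more directly adapted to this purpose than Kraft--Procesi normality; the latter tells you the reduced ideal is what you expect, but Weyman's theorem gives you the $GL_n$-module decomposition of the generating space, which makes the containment of ideals a short computation.

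In short: your step 1 (use nilpotent orbits in type A) and step 3 (cite reducedness of orbit closures) are in the right neighborhood, but step 2 as posed is not tractable, and you are missing the two structural reductions that render the problem finite. I would also be cautious about the $\GG_m$-degeneration backup: it is not clear what flat conical degeneration of $\cY_\mu^\lambda$ would have a manifestly reduced special fibre, and constructing one would likely require knowing the ideal well enough that you could just prove reducedness directly.
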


Our aim is to prove this conjecture in type A:
\begin{Theorem}
\label{MainThm}
Let $G=SL_n$.  Then Conjecture \ref{MainConj} holds.
\end{Theorem}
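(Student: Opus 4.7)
The plan is to leverage the Mirkovic-Vybornov isomorphism, which in type A identifies $\Gr_\mu^\lambda$ for $SL_n$ with a transverse slice $\cT_\mu^\lambda$ inside a nilpotent orbit closure $\ol{\cO}_\lambda \subset \fgl_N$, for $N$ depending on $\lambda$. Since $\ol{\cO}_\lambda$ is cut out by an explicit determinantal ideal and is well known to be reduced (Kraft-Procesi, De Concini-Procesi), the scheme $\cT_\mu^\lambda$ inherits reducedness. The aim is to upgrade the MV isomorphism of varieties to an isomorphism of schemes $\cY_\mu^\lambda \cong \cT_\mu^\lambda$; reducedness on the right will then force reducedness of $\cY_\mu^\lambda$, and combined with the set-theoretic equality $V(J_\mu^\lambda) = \Gr_\mu^\lambda$ from \cite{KWWY}, the ideals $J_\mu^\lambda$ and $I(\Gr_\mu^\lambda)$ must coincide.

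The main step is to describe the MV map in explicit coordinates on $\Gr_\mu$. The orbit $\Gr_\mu$ is an affine space, and on it one has a natural generating matrix $T(u)$ whose coefficients (in an RTT/Yangian presentation) serve both as polynomial generators for $\cO(\Gr_\mu)$ and as the source of the explicit generators of $J_\mu^\lambda$ introduced in \cite{KWWY}. I would produce an explicit block matrix $A(T)$ of size $N \times N$ whose entries are polynomials in the coefficients of $T(u)$, such that the MV isomorphism is realized by $T \mapsto A(T)$, and then show that the pullback along this map of the determinantal ideal defining $\ol{\cO}_\lambda$ is exactly $J_\mu^\lambda$.

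To carry out the ideal comparison I would use the Gauss decomposition of $T(u)$ and quasi-determinant identities, rewriting the classical minors of $A(T)$ as explicit $\cO(\Gr_\mu)$-linear combinations of the KWWY generators, and conversely. If a direct matching proves cumbersome, a softer route is available: one obtains a surjection of coordinate rings $\cO(\cT_\mu^\lambda) \twoheadrightarrow \cO(\cY_\mu^\lambda)$, and since both rings are equidimensional of the same (known) dimension and the source is Cohen-Macaulay, such a surjection is forced to be an isomorphism.

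The main obstacle is precisely this ideal-level comparison, as opposed to the already-known variety-level isomorphism. The KWWY generators of $J_\mu^\lambda$ are Yangian-style matrix coefficients, while the defining equations of $\ol{\cO}_\lambda$ are classical minors; identifying them scheme-theoretically (not merely up to radical) demands careful Yangian bookkeeping and is the genuinely type-A content of the proof, relying both on the existence of MV and on the fact that in type A nilpotent orbit closures in $\fgl_N$ admit a purely determinantal presentation with no extra defining relations.
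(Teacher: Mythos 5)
Your approach is genuinely different from the paper's, though both ultimately lean on the same type-A miracle: the determinantal presentation of nilpotent orbit closures in $\fgl_N$. You propose to upgrade the Mirkovi\'c--Vybornov isomorphism $\Gr_\mu^\lambda \cong \cT_\mu^\lambda$ to a scheme isomorphism for all pairs $(\lambda,\mu)$ at once and quote Kraft--Procesi/Weyman for reducedness of the target. The paper instead takes a much lighter-footed route that avoids needing MV at all (indeed, a scheme-theoretic MV isomorphism was not available at the time and is essentially a theorem of comparable depth): it (i) observes the essentially tautological scheme isomorphism $\cY_0^{n\varpi_1} \cong \cN$ coming from $\det(I_n + t^{-1}X) = 1$; (ii) uses Weyman's explicit description of $I(\ol{\cO_\bu})$ to check that $\cY_0^\lambda = \ol{\cO_\bu}$ as subschemes of $\cN$ whenever $\lambda \leq n\varpi_1$ (this is exactly the radical-plus-one-containment argument you sketch in your ``softer route,'' just carried out only inside the nilpotent cone); (iii) removes the constraint $\lambda \leq n\varpi_1$ by the padding-by-zero isomorphism $^{SL_n}\cY_0^\lambda \cong {^{SL_{kn}}\cY^{\tau(\lambda)}_{k\varpi_n}}$ of Proposition~\ref{PropCalc}, an elementary computation with block matrices motivated by the quiver gauge theory picture; and (iv) reduces the general $\mu$ to $\mu = 0$ by citing \cite[Theorem~1.6]{KMW}. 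Two cautions about your plan: first, you need $\cT_\mu^\lambda$ itself to be reduced, not just $\ol{\cO_\lambda}$; this requires a transversality/Cohen--Macaulay argument for the slice that you gloss over, and which is precisely the role played by the input from \cite{KMW} in the paper. Second, the ``direct matching'' of KWWY generators against classical minors for arbitrary $\mu$ is considerably harder than what the paper needs, since the paper only carries out such a matching on $\Gr_0$ where the dictionary $I_n + t^{-1}X \leftrightarrow X$ makes $\Delta^{(s)}_{C,D}$ literally a sum of $s\times s$ minors of $X$. So your plan is viable but trades the paper's two modest inputs (Weyman's equations plus \cite{KMW}) for one heavy one (scheme-theoretic MV), and most of its genuine difficulty lives in exactly the ideal comparison you flag as the obstacle.
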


For $G=SL_2,SL_3$, along with some special cases for general $n$, this conjecture is  proved in \cite{KMW}.  We will show how the main result of \cite{KMW} along with two additional ingredients proves Theorem \ref{MainThm}.  These ingredients are a) Weyman's description of the ideals defining nilpotent orbits in $\fsl_n$, and b) an isomorphism  motivated by \cite{BFN2} between certain $\cY_\mu^\lambda$ for different $n$.

\subsection{Consequences of the Reducedness Conjecture}

Before describing the proof of Theorem \ref{MainThm}, we discuss some implications of Conjecture \ref{MainConj}.

\subsubsection{Truncated shifted Yangians}
\label{intro: truncated shifted Yangians}
The main aim of \cite{KWWY} is to introduce a quantum group, the so-called truncated shifted Yangian, in order to quantize the slice $\Gr_\mu^\lambda$.

The truncated shifted Yangian is defined in several steps.  Firstly, one constructs the shifted Yangian $Y_\mu$, a $\bC[\hbar]$-algebra which quantizes $\Gr_\mu$ in the sense that
$$
Y_\mu/\hbar Y_\mu \cong \bC[\Gr_\mu].
$$
Next, one defines the GKLO representation $\Psi_\mu^\lambda: Y_\mu \to D_\mu^\lambda$ which depends also on the parameter $\lambda$.  The target space $D_\mu^\lambda$ is an algebra of difference operators.  

Finally, one defines the truncated shifted Yangian to be image $\im(\Psi_\mu^\lambda)$ of the shifted Yangian under the GKLO representation.  

In \cite{KWWY} it is shown that the truncated shifted Yangian quantizes a scheme supported on $\Gr_\mu^\lambda$.  Furthermore it is shown that Conjecture \ref{MainConj} implies that this scheme is actually $\Gr_\mu^\lambda$.  Recently, in \cite[Appendix B]{BFN1}, the latter statement was proven for all simply-laced $G$ by identifying the truncated shifted Yangian with the quantized Coulomb branch $\cA_\hbar$ of a $3d$ $\cN=4$ quiver gauge theory.  

By \cite[Theorem 4.10]{KWWY}, Conjecture \ref{MainConj} also implies that one can give a set of explicit generators for the kernel of $\Psi_\mu^\lambda$.  We thereby obtain a presentation for the truncated shifted Yangian, or equivalently for the quantized Coulomb branch. Denoting this explicitly presented algebra by $Y_\mu^\lambda$, in general there is a diagram
\begin{equation} \label{yangians diagram}
\begin{tikzcd}
Y_\mu \drar[two heads] \dar[two heads] &  \\ Y_\mu^\lambda \rar[two heads] & \im(\Psi_\mu^\lambda)  \cong \cA_\hbar
\end{tikzcd}
\end{equation}
Theorem \ref{MainThm} implies that, in type A, the bottom row consists of isomorphisms.  

We remark that in \cite{KTWWY} the highest weight theory of the truncated shifted Yangian is studied via the algebra $Y_\mu^\lambda$.  Therefore Theorem \ref{MainThm} implies that the results in \cite{KTWWY} actually also hold for the algebra $\im(\Psi_\mu^\lambda)$.

\subsubsection{Modular description of spherical Schubert varieties}

Another consequence of Conjecture \ref{MainConj} is a modular description of the spherical Schubert varieties.  We give a brief indication of this connection; for a thorough discussion  see Sections 1 and 2 of \cite{KMW} (cf. also \cite[Remark 2.1.7]{Z}).

The modular description of the thin affine Grassmannian (due to Beauville-Laszlo) is as follows: Let $X$ be a smooth curve, and $x\in X$ a closed point.  Then an $S$--point of $\Gr$ consists of a pair $(\cP,\phi)$, where $\cP$ is a principal $G$--bundle on $S\times X$, and $\phi:\cP_0|_{S\times (X\setminus x)} \to \cP|_{S\times (X\setminus x)}$ is an isomorphism where $\cP_0$ is the trivial bundle on $S\times X$.

Finkelberg and Mirkovi\'c propose a modular description of the spherical Schubert varieties $\ol{\Gr^\lambda}$ \cite[Proposition 10.2]{FM}.  They consider pairs $(\cP,\phi)$ as above, where the pole of $\phi$ at $x$ is controlled by $\lambda$ (see \cite[Section 2]{KMW} for a precise description).  While  this description is correct set-theoretically, it is not clear that the moduli space is reduced.

Conjecture \ref{MainConj} together with \cite[Proposition 5.1]{KMW} implies this modular description of $ \ol{\Gr^\lambda} $ is correct for $ G = SL_n $.

\subsection*{Acknowledgements}
We thank Alexander Braverman, Michael Finkelberg, and Anthony Henderson for helpful conversations.  
J.K. is supported by NSERC and a Sloan fellowship. D.M. is supported by a PIMS Postdoctoral Fellowship. A.W. was supported in part by a Government of Ontario graduate scholarship.  O.Y. is supported by the Australian Research Council.   
This research was supported in part by Perimeter Institute for Theoretical Physics. Research at Perimeter Institute is supported by the Government of Canada through Industry Canada and by the Province of Ontario through the Ministry of Economic Development and Innovation.


\section{Definitions and Overview}
\label{SecNot}
\subsection{Definitions} We recall some notations and results from \cite{KMW,KWWY}.  We work throughout over $\bC$.

Let $G$ be a semisimple group, $\fg$ its Lie algbera, and let $I$ denote the nodes of the Dynkin diagram of $\fg$.
We write $\lambda,\mu,$ etc. for coweights of $G$ and $\lambda^\vee,\mu^\vee,$ etc. for weights of $G$.  Let $\varpi_i,\varpi_i^\vee$ (respectively $\alpha_i,\alpha_i^\vee$) be the fundamental coweights and fundamental weights of $\fg$ (resp. simple coroots and simple roots of $\fg$).  Let $w_0$ be the longest element of the Weyl group and set $\lambda^*=-w_0\lambda, \varpi_{i^*}=\varpi_i^*, \alpha_{i^*}=\alpha_i^*$. Let $(\cdot,\cdot)$ be the Killing form $\fg$, and for coweights $\mu,\lambda$ write $\mu \leq \lambda$ if $\lambda-\mu \in \bigoplus_i \bN\alpha_i$.
\begin{Remark}
In Sections \ref{sec:iso} and \ref{sec:bfn} we will work with two semisimple groups simultaneously.  When writing $w_0, \lambda^\ast$, etc., the relevant group will be clear from context.
\end{Remark}

Let $G((t^{-1}))$ (respectively $G[t],G[[t^{-1}]]$) be the $\bC((t^{-1}))$ points of $G$ (respectively the $\bC[t],\bC[[t^{-1}]]$ points of $G$).  Define also $G_1[[t^{-1}]] \subset G[[t^{-1}]]$ as the kernel of the evaluation $G[[t^{-1}]] \to G, t \mapsto \infty$.  A coweight $\lambda$ of $G$ can be considered as a $\bC((t^{-1}))$ point of $G$, and we let $t^\lambda$ denote its image in $\Gr$. There is a corresponding orbit $\Gr^\lambda = G[t] t^\lambda$, and spherical Schubert variety $\ol{\Gr^\lambda}$. Recall that $\Gr^\mu \subset \ol{\Gr^\lambda}$ if and only if $\mu \leq \lambda$.

Consider a pair of dominant coweights $\mu,\lambda$ such that $\mu \leq \lambda$.  Define $\Gr_\mu^\lambda=\Gr_\mu \cap \ol{\Gr^\lambda}$, where
$$
\Gr_\mu=G_1[[t^{-1}]]t^{w_0\mu}.
$$
$\Gr_\mu^\lambda$ is a transversal slice to $\Gr_\mu$ in $\ol{\Gr^\lambda}$ at the point $t^{w_0\mu}$.

 Let $V$ be a representation of $G$ and $v\in V,\beta\in V^*$.  We'll introduce functions $\Delta_{\beta,v}^{(s)}$ on  $G_1[[t^{-1}]]$.   Let $\Delta_{\beta,v} \in \cO(G)$ be the matrix coefficient: $g\mapsto \beta(gv)$.  Then $G_1[[t^{-1}]]$ acts on $V[[t^{-1}]]$ and for $g\in G_1[[t^{-1}]]$ we have
$$
\Delta_{\beta,v}(g)=\sum_{s\geq0}\Delta_{\beta,v}^{(s)}(g)t^{-s}.
$$

For instance, consider $G=SL_n$ and the representation $\bigwedge^i\bC^n$ (this case is sufficient for our purposes).  If we take $v,\beta$ to be standard basis and dual basis elements, then $\Delta_{\beta,v}(g)$ is a $i\times i$ minor of $g$, and $\Delta_{\beta,v}^{(s)}(g)$ extracts its $t^{-s}$ coefficient.  It will be convenient for us to sometimes use the notation $\Delta_{C,D}^{(s)}$ to denote this function, where the matrix minor is taken with respect to rows specified by $C$ and columns specified by $D$ (here $C,D \subset \{1,...,n\}$ both have cardinality $i$).

We refer to \cite[Section 2]{KWWY} for results concerning the Poisson structure on $\Gr$.  We recall that $G_1[[t^{-1}]]$ is a Poisson algebraic group, $\Gr_\mu$ is a Poisson homogenous space, and $\Gr_\mu^\lambda$ is a Poisson subvariety of $\Gr$.
The Poisson bracket on $\cO(G_1[[t^{-1}]])$ is specified by the following equations:
$$
\{\Delta_{\beta_1,v_1}^{(r+1)},\Delta_{\beta_2,v_2}^{(s)} \}-\{\Delta_{\beta_1,v_1}^{(r)},\Delta_{\beta_2,v_2}^{(s+1)} \}=\sum_a \Big( \Delta_{J_a\beta_1,v_1}^{(r)}\Delta_{J^a\beta_2,v_2}^{(s)}- \Delta_{\beta_1,J_av_1}^{(r)}\Delta_{\beta_2,J^av_2}^{(s)} \Big)
$$
for all $r,s\geq0$, where $\{J_a\},\{J^a\}$ are dual bases of $\fg$ with respect to $(\cdot,\cdot)$ (see \cite[Section 2.6]{KWWY}).

Let $V(\varpi_i^\vee)$ be the irreducible representation of $G$ of highest weight $\varpi_i^\vee$.  Fix a highest weight vector $v_i\in V(\varpi_i^\vee)$ and a lowest weight dual vector $v_i^*\in V(\varpi_i^\vee)^*$.  Write $\lambda-\mu=\sum_im_i\alpha_{i^*}$ (recall that $\mu\leq\lambda$, so all $m_i \geq 0$).
$\cO(\Gr_\mu)$ can be canonically identified with the functions on $G_1[[t^{-1}]]$ invariant under the stabilizer in $G_1[[t^{-1}]]$ of $t^{w_0\mu}$.  This is a Poisson subalgebra of $\cO(G_1[[t^{-1}]])$.  It can be shown that the functions $\Delta_{v_i^*,v_i}^{(s)}$ for $i\in I$ and $s>m_i$ are invariant under this stabilizer (\cite[Lemma 2.19]{KWWY}), and hence can be considered as functions on $\Gr_\mu$.

Now we can define the central objects appearing in Conjecture \ref{MainConj}:
\begin{itemize}
\item let $J_\mu^\lambda \subset \cO(\Gr_\mu)$ be the Poisson ideal generated by $\Delta_{v_i^*,v_i}^{(s)}$ for $i\in I$ and $s>m_i$, and
\item let $\cY_\mu^\lambda={^G}\cY_\mu^\lambda$ be the corresponding subscheme of $\Gr$.
\end{itemize}
Note that in \cite{KMW} $\cY_\mu^\lambda$ is defined as the intersection $\ol{\cY^\lambda}\cap \Gr_\mu$, where $\ol{\cY^\lambda}$ is given by the modular description of the orbit closures due to Finkelberg-Mirkovi\'c.  \cite[Theorem 8.4]{KMW} proves that the ideal of $\ol{\cY^\lambda}\cap \Gr_\mu$ is $J_\mu^\lambda$, and so here we take this as the definition.

\subsection{Overview of Proof}
\label{section: proof overview}
For the remainder of the paper, unless otherwise noted we let $G=SL_n$.
The proof of Theorem  \ref{MainThm} relies on the following three results.  \begin{Proposition}
\label{PropWeyman}
Suppose that $\lambda \leq n\varpi_1$.  Then $\cY^\lambda_0$ is reduced.
\end{Proposition}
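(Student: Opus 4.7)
The plan is to identify the scheme $\cY^\lambda_0$ with a nilpotent orbit closure in $\fsl_n$ via a Mirkovi\'c--Vybornov-type map, and then to invoke Weyman's theorem, which gives explicit generators for the defining ideals of nilpotent orbit closures in $\fsl_n$.  Reducedness will then follow from the classical fact that nilpotent orbit closures in type A are reduced (indeed, normal by Kraft--Procesi).

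The hypothesis $\lambda \leq n\varpi_1$ amounts to saying that $\lambda$, viewed as a partition, has at most $n$ boxes, which is the range in which a Mirkovi\'c--Vybornov-type isomorphism identifies $\Gr^\lambda_0$ set-theoretically with the closure of a nilpotent orbit $\ol{\cO_{\lambda'}} \subset \fsl_n$, where $\lambda'$ is an appropriate partition built from $\lambda$ (likely its transpose, up to conventions).  Using the machinery developed in \cite{KMW}, one upgrades this to a scheme-theoretic comparison: the generators $\Delta^{(s)}_{v_i^*, v_i}$ of the Poisson ideal $J^\lambda_0$ (for $i \in I$ and $s > m_i$), which are coefficients of $i \times i$ loop-minors of $g \in G_1[[t^{-1}]]$, translate under the identification into specific minors of the nilpotent matrix $N \in \fsl_n$.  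A crucial subtlety is that $J^\lambda_0$ is the \emph{Poisson} ideal: Poisson brackets of the explicit generators may be needed to produce all the minors that appear on the Lie algebra side.

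Weyman's theorem then presents the ideal of $\ol{\cO_{\lambda'}} \subset \fsl_n$ explicitly as generated by minors of appropriate sizes of the matrix powers $N, N^2, N^3, \ldots$.  Once it is verified that the Poisson closure of the generators of $J^\lambda_0$ matches Weyman's generators, the two ideals coincide, and reducedness of $\cY^\lambda_0$ follows from reducedness of the nilpotent orbit closure $\ol{\cO_{\lambda'}}$.

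The main obstacle is the explicit matching of generators: the combinatorial translation between loop-minors of $g = 1 + g_1 t^{-1} + g_2 t^{-2} + \cdots$ on the Grassmannian side and minors of matrix powers $N^k$ on the Lie algebra side, together with the verification that the Poisson bracket on $\cO(\Gr_0)$ produces, in the image, exactly the minors prescribed by Weyman.  Once this identification is secured, the reducedness conclusion is immediate from Weyman's theorem.
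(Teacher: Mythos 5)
Your overall plan is the right one in spirit: identify $\cY^\lambda_0$ with a nilpotent orbit closure and invoke Weyman, taking as input the set-theoretic identification $\Gr^\lambda_0 \cong \overline{\OO_{\bu}}$. However, the proposal leaves the technical core -- the comparison of ideals -- as an unresolved ``matching of generators,'' and it misidentifies the structure of that comparison in two ways that matter.

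First, you flag the subtlety that $J^\lambda_0$ is a Poisson ideal and that ``Poisson brackets of the explicit generators may be needed.'' The paper sidesteps this entirely by citing \cite[Proposition 2.15]{KWWY}: in the case $\mu = 0$, the ideal $J^\lambda_0$ is already generated \emph{as an ordinary ideal} by the functions $\Delta^{(s)}_{\beta,v}$ for $s > m_i$, where $\beta, v$ run over full bases of $V(\varpi_i^\vee)^*$ and $V(\varpi_i^\vee)$. This eliminates the Poisson closure from the problem and turns the ideal into a concrete, $GL_n$-stable linear span of functions $f^{(s)}_{C,D}$ on $\cN \subset \Matnxn$. Second, you describe Weyman's generators as ``minors of the matrix powers $N, N^2, N^3, \ldots$''. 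The rank conditions on powers are the \emph{set-theoretic} equations (de Concini--Procesi, Tanisaki). The theorem of Weyman used here is stated in a different form: the span $M_p$ of $p\times p$ minors of $N$ decomposes by the Pieri rule into Schur modules $U_{k,p} \cong S_{\alpha_k}E$, and Weyman's generators for $I(\overline{\OO_\bu})$ are certain of these $U_{i,p}$. This is not the same presentation as minors of powers, so the ``explicit matching'' you envision is pointed at the wrong target.

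Most importantly, the paper never attempts a full matching of generators, which would be substantially harder; it proves only the inclusion $I(\overline{\OO_\bu}) \subset I(\cY^\lambda_0)$, by producing for each Weyman generator $U_{k,p}$ a highest-weight vector of the form $f^{(p)}_{C_0,D_0}$ and hence showing $U_{k,p} \subset W_{n-k,p}$, where $W_{n-k,p}$ is the span of the $f^{(p)}_{C,D}$ with $|C|=|D|=n-k$. Since $V(J^\lambda_0) = \Gr^\lambda_0 = \overline{\OO_\bu}$ set-theoretically, the two ideals have the same radical; combined with the one-sided inclusion and the fact that $I(\overline{\OO_\bu})$ is radical, one concludes $I(\cY^\lambda_0) = I(\overline{\OO_\bu})$ and hence reducedness. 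Your proposal, as written, treats this comparison as an obstacle to be overcome by ``combinatorial translation'' of both sides; the actual argument is cleaner: a single highest-weight computation plus a radical argument, no exact matching required.
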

Next  let $\lambda$ be an arbitrary coweight of $G$, such that $\lambda \geq \mu = 0$, and write it as a sum of simple coroots and also of fundamental coweights: $$\lambda=\sum m_j\alpha_{j^*}=\sum\lambda_j\varpi_{j^*}.$$  Set $k:=m_1 = \langle \lambda, \varpi_{n-1}^\vee \rangle $, and let $\tau$ be the standard inclusion of Dynkin diagrams $A_{n-1} \to A_{kn-1}$.  Denote by $\tau$ also the map on coweights, which extends $\varpi_i \mapsto \varpi_{\tau(i)}$ by linearity; for our chosen $\tau$ this is simply $\varpi_i \mapsto \varpi_i$.
\begin{Proposition}
\label{PropCalc}
There is an isomorphism
\begin{equation}
\label{eq:bfniso}
^{SL_n}\cY^\lambda_0 \cong {^{SL_{kn}}\cY^{\tau(\lambda)}_{k\varpi_n}}.
\end{equation}
\end{Proposition}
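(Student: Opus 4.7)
My plan is to realize both schemes concretely in matrix-coefficient coordinates and match their defining equations, which are generated by coefficients of antidiagonal minors of matrix power series. The candidate isomorphism is natural from the perspective of Coulomb branches \cite{BFN2}, where both sides should arise from closely related type A quiver gauge theories.

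First, I would write both sides in coordinates. For $SL_n$ with $\mu = 0$, an element of $\Gr_0 = G_1[[t^{-1}]]$ is a series $g(t) = \bI_n + \sum_{s \geq 1} g_s t^{-s}$ with $\det g(t) = 1$, and $\Delta_{v_i^\ast, v_i}^{(s)}(g)$ is the $t^{-s}$-coefficient of the minor of $g(t)$ on rows $\{1,\ldots,i\}$ and columns $\{n-i+1,\ldots,n\}$. For $SL_{kn}$ with $\mu = k\varpi_n$, elements of $\Gr_{k\varpi_n}$ are cosets of the form $h(t) \cdot t^{w_0(k\varpi_n)}$ with $h \in G_1[[t^{-1}]]^{SL_{kn}}$; the basepoint $t^{w_0(k\varpi_n)}$ is an explicit diagonal matrix whose entries are powers of $t$, and the generators $\Delta_{v_i^\ast,v_i}^{(s)}$ for $i=1,\ldots,kn-1$ are again antidiagonal minors, now of size up to $kn \times kn$.

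Next, I would define the candidate isomorphism by embedding an $SL_n$ element $g(t)$ as a block matrix
$$
\tilde{g}(t) = \begin{pmatrix} g(t) & 0 \\ 0 & \bI_{(k-1)n} \end{pmatrix}
$$
inside $SL_{kn}((t^{-1}))$, then translating by $t^{w_0(k\varpi_n)}$ to land in the $SL_{kn}$ slice. For $i < n$, the $i \times i$ antidiagonal minor of $\tilde{g}(t) \cdot t^{w_0(k\varpi_n)}$ equals the corresponding $SL_n$ minor of $g(t)$ times an explicit power of $t$ coming from the basepoint diagonal; after absorbing this $t$-shift, the generator degree bound $s > m'_i$ on the $SL_{kn}$ side should translate exactly to $s > m_i$ on the $SL_n$ side. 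This matching is a combinatorial calculation with the $A_{kn-1}$ Cartan matrix starting from the decomposition $\tau(\lambda) - k\varpi_n = \sum m'_i \alpha_{i^\ast}$.

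The main obstacle is handling generators indexed by Dynkin nodes $i \geq n$, which exist on the $SL_{kn}$ side but have no direct $SL_n$ analogue. In the block form above, the corresponding $i \times i$ antidiagonal minors necessarily involve the identity block, and one must verify that these are redundant modulo the $i < n$ generators together with the equations cutting out the image of the block embedding. A secondary point is compatibility with the quotient by the stabilizers of $t^{w_0 \mu}$ on both sides, and with the Poisson structure, which should follow by direct computation using the bracket relations of \cite[Section 2.6]{KWWY}.
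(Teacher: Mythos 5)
Your overall strategy --- work with explicit coset representatives, embed $SL_n$ into $SL_{kn}$ by a block map, translate by $t^{w_0(k\varpi_n)}$, and compare valuations of antidiagonal minors after a $t$-shift --- is exactly the route the paper takes. The paper's map is $g\mapsto \left(\begin{smallmatrix} t^{-1}I & 0 \\ 0 & t^{k-1}g\end{smallmatrix}\right)$, with $g$ sitting in an $n\times n$ block and $t^{-1}I$ filling the complementary $(k-1)n\times (k-1)n$ block; your $\tilde g = \operatorname{diag}(g, I_{(k-1)n})$ will not multiply block-compatibly against $t^{w_0(k\varpi_n)} = \operatorname{diag}(t^{-1}I_{(k-1)n}, t^{k-1}I_n)$ unless you pick conventions that swap the block order, so you should double-check this alignment (the paper's Remark on comparing with \cite{BFN2} notes such twists by $w_0$ are needed).

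The genuine gap is in what you call the ``main obstacle,'' and it is more serious than you suggest. Your framing presumes that ${^{SL_{kn}}\cY_{k\varpi_n}^{\tau(\lambda)}}$ already lives inside the image of the block embedding, so that the remaining task is a redundancy check on the $i\geq n$ generators. But a priori ${^{SL_{kn}}\cW_{k\varpi_n}}$ has arbitrary off-diagonal blocks $b,c$ and an $a$-block with free higher-order terms; you must first show that the slice conditions force $a = t^{-1}I$ and $b = c = 0$ on the nose. The paper's key observation is that the $1\times 1$ minor condition in the definition of $\cY_{k\varpi_n}^{\tau(\lambda)}$ (valuation $\geq -1$) already kills every $t^{-2}$-and-below coefficient of $a - t^{-1}I$, $b$, and $c$. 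Without this, you have no grip on surjectivity: your block embedding gives a well-defined map into $\cW_{k\varpi_n}$ and you can check it lands in $\cY_{k\varpi_n}^{\tau(\lambda)}$, but you have not argued that every point of $\cY_{k\varpi_n}^{\tau(\lambda)}$ arises this way. Until you identify the specific generators that carve the slice down to the block-diagonal locus, ``verify that these are redundant modulo \dots the equations cutting out the image of the block embedding'' is circular: those equations are exactly the thing you need to produce. Once you have the $1\times 1$ observation, the remaining minor-valuation matching (which you sketch correctly, and which the paper carries out by splitting into $j\leq kn-n$ and $j>kn-n$ and using $m_1 = k$ and $m_{\ell+1}-m_\ell\leq m_1$) goes through. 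One further small point: verifying Poisson compatibility is not needed here --- Proposition \ref{PropCalc} only asserts an isomorphism of schemes; the Poisson statement is handled separately in the paper's discussion of Coulomb branches.
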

\begin{Proposition}[\mbox{\cite[Theorem 1.6]{KMW}}]
\label{PropKMW}
Let $\lambda$ be a dominant coweight.  If $\cY^\lambda_0$ is reduced then $\cY_\mu^\lambda$ is reduced for all $\mu \leq \lambda$.
\end{Proposition}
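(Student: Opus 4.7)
The plan is to realize $\cY^\lambda_\mu$, scheme-theoretically, as a transversal slice of $\cY^\lambda_0$ along the stratum $\Gr^\mu$ at the point $t^{w_0\mu}$, and to deduce reducedness from the hypothesis that $\cY^\lambda_0$ is reduced.

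First, I would exploit the local product structure of the affine Grassmannian near $t^{w_0\mu}$. There is a pro-unipotent subgroup of $G[t]$ acting on $\Gr$ whose orbit through $t^{w_0\mu}$ is an open neighborhood of $t^{w_0\mu}$ in $\Gr^\mu$, and the multiplication map from this subgroup times $\Gr_\mu$ into $\Gr$ is an open immersion onto a Zariski open neighborhood of $t^{w_0\mu}$.  Set-theoretically this exhibits $\Gr^\lambda_\mu$ as a transversal slice to $\Gr^\mu$ in $\ol{\Gr^\lambda}$, and since open immersions preserve reducedness in both directions, the reducedness of $\cY^\lambda_0 = \ol{\Gr^\lambda}$ (as schemes, by hypothesis) transfers to reducedness of the scheme-theoretic transversal slice.

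Second, I would identify this reduced transversal slice with $\cY^\lambda_\mu$ scheme-theoretically.  Writing $\lambda = \sum n_i \alpha_{i^*}$ and $\lambda - \mu = \sum m_i \alpha_{i^*}$, the generators of $J^\lambda_0$ are $\Delta^{(s)}_{v_i^*, v_i}$ for $s > n_i$, while those of $J^\lambda_\mu$ are $\Delta^{(s)}_{v_i^*, v_i}$ for $s > m_i$.  The shift $n_i - m_i$ should be precisely the order of vanishing forced by the change of basepoint from $t^0$ to $t^{w_0\mu}$, and one wants to check explicitly that the pullback of the ideal $J^\lambda_0$ through the local product decomposition of step one matches $J^\lambda_\mu \otimes \cO(\Gr^\mu)$.

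The main obstacle is the second step: verifying the precise match of Poisson ideals.  A priori, pulling back $J^\lambda_0$ through the slice construction could yield either a strictly larger ideal (additional Poisson-theoretic relations forced by the geometry) or a strictly smaller one (the prescribed generators not sufficing scheme-theoretically).  Ruling out both requires a careful manipulation of the Poisson bracket formulas recorded in the preceding subsection, together with explicit transformation laws for the matrix-coefficient functions $\Delta^{(s)}_{\beta,v}$ under translation by $t^{w_0\mu}$, to confirm that the listed generators of $J^\lambda_\mu$ are exactly those arising from $J^\lambda_0$ via transversal-slice restriction and that Poisson closure introduces no further relations.
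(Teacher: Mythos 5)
The geometric idea you propose --- use the local Zariski product structure $U \times \Gr_\mu \hookrightarrow \Gr$ near $t^{w_0\mu}$ with $U\subset G[t]$ pro-unipotent, and transfer reducedness --- is indeed the right skeleton of the argument. But your first step has a genuine gap at the claimed identity ``$\cY^\lambda_0 = \ol{\Gr^\lambda}$ (as schemes).'' This is false. The scheme $\cY^\lambda_0 = \ol{\cY^\lambda}\cap\Gr_0$ is only the open piece of the Finkelberg--Mirkovi\'c scheme $\ol{\cY^\lambda}$ lying over the cell $\Gr_0 = G_1[[t^{-1}]]\cdot e$, and for $\mu\ne 0$ the point $t^{w_0\mu}$ does not lie in $\Gr_0$ at all: the $\Gr_\nu = G_1[[t^{-1}]]\,t^{w_0\nu}$ are pairwise disjoint $G_1[[t^{-1}]]$-orbits. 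Hence the product neighborhood $U\cdot\Gr_\mu$ of $t^{w_0\mu}$ is not contained in $\Gr_0$, and the hypothesis that $\cY^\lambda_0$ is reduced gives you no direct control of $\ol{\cY^\lambda}$ near $t^{w_0\mu}$. The missing input is the propagation of reducedness from $\cY^\lambda_0$ to all of $\ol{\cY^\lambda}$: the scheme $\ol{\cY^\lambda}$ is $G[t]$-equivariant (the Finkelberg--Mirkovi\'c pole conditions are $G[t]$-stable), and since $t^0\in\ol{\Gr^\nu}$ for every dominant $0\le\nu\le\lambda$, the open set $\Gr_0$ meets each orbit $\Gr^\nu$, so the $G[t]$-translates of $\Gr_0$ cover $\ol{\Gr^\lambda}$. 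Consequently the $G[t]$-translates of the reduced open subscheme $\cY^\lambda_0$ cover $\ol{\cY^\lambda}$, which is therefore reduced. Only after that does your argument close: $\ol{\cY^\lambda}\cap(U\cdot\Gr_\mu)\cong U\times\cY^\lambda_\mu$ (again using $U$-equivariance of $\ol{\cY^\lambda}$), open subschemes of reduced schemes are reduced, and a product over $\bC$ is reduced iff each factor is, giving reducedness of $\cY^\lambda_\mu$.

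Your ``second step'' concern --- that one must push the Poisson bracket formulas and the transformation laws of the $\Delta^{(s)}_{\beta,v}$ through the slice --- is a red herring in this framework. The identity $J^\lambda_\mu = I(\ol{\cY^\lambda}\cap\Gr_\mu)$ is exactly \cite[Theorem 8.4]{KMW}, which the present paper adopts as the \emph{definition} of $\cY^\lambda_\mu$; once one has it, the scheme-theoretic product decomposition near $t^{w_0\mu}$ follows purely from $G[t]$-equivariance of $\ol{\cY^\lambda}$, with no need to compare generators of $J^\lambda_0$ and $J^\lambda_\mu$ or to invoke Poisson closure. Note finally that the paper under review does not give its own proof of this proposition; it is cited from \cite{KMW}.
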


Given these results, and  a simple computation showing that $\tau(\lambda) \leq kn\varpi_1$ (Lemma \ref{lem:simpcomp}(b)), the proof of Theorem \ref{MainThm} is immediate.

It remains then only to prove the first two propositions.  Proposition \ref{PropWeyman} will be proved in Section \ref{SectionWeyman}, and Proposition \ref{PropCalc} in Section \ref{sec:iso}.  In Section \ref{sec:bfn} we   discuss how (\ref{eq:bfniso}) is motivated by as isomorphism between affine Grassmannian slices that follows from  \cite{BFN2}.  In Section \ref{sec:truncatediso} we prove that this isomorphism can be quantized using truncated shifted Yangians.


\section{Weyman's equations}
\label{SectionWeyman}

\subsection{The nilpotent cone}

Since $\Gr_0\cong G_1[[t^{-1}]]$ we can view $\cY^{n\varpi_1}_0$ as a subscheme of $G_1[[t^{-1}]]$.  Let $\lambda\geq 0$ be a dominant coweight of $G$ and as above set $\lambda=\sum m_i\alpha_{i^*}$.  Recall the following (which holds for arbitrary semisimple groups):
\begin{Proposition}[\mbox{\cite[Proposition 2.15]{KWWY}}]
\label{PropKWWY}
$J_0^\lambda$ is generated as an ordinary ideal  by $\Delta_{\beta,v}^{(s)}$ for $s>m_i$, where $i\in I$ and $\beta,v$ range over bases for $V(\varpi_i)^*$ and $V(\varpi_i)$.
\end{Proposition}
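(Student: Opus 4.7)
The plan is to prove the equality $J_0^\lambda = I_0^\lambda$, where $I_0^\lambda$ is the ordinary ideal of $\cO(\Gr_0) = \cO(G_1[[t^{-1}]])$ generated by all $\Delta_{\beta,v}^{(s)}$ with $i \in I$, $s > m_i$, and $\beta, v$ ranging over bases of $V(\varpi_i)^*$ and $V(\varpi_i)$.  The inclusion $J_0^\lambda \subseteq I_0^\lambda$ will follow from verifying that $I_0^\lambda$ is itself a Poisson ideal, since it already contains the particular generators $\Delta_{v_i^*,v_i}^{(s)}$ of $J_0^\lambda$.  The reverse inclusion $I_0^\lambda \subseteq J_0^\lambda$ amounts to realizing every $\Delta_{\beta,v}^{(s)}$ ($s > m_i$) via iterated Poisson brackets of the $\Delta_{v_i^*, v_i}^{(s)}$ with ambient functions.

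To see that $I_0^\lambda$ is Poisson, I would begin with the observation that $\Delta_{\beta',v'}^{(0)}(g) = \beta'(v')$ is constant on $G_1[[t^{-1}]]$ and hence Poisson-central.  Telescoping the bracket recursion from $(r,s)$ down to $(0,r+s)$ then produces a closed expression
\[
\{\Delta_{\beta',v'}^{(r)}, \Delta_{\beta,v}^{(s)}\} = \sum_{k=0}^{r-1} \sum_a \Big( \Delta_{J_a\beta',v'}^{(k)}\Delta_{J^a\beta,v}^{(r+s-k-1)} - \Delta_{\beta',J_av'}^{(k)}\Delta_{\beta,J^av}^{(r+s-k-1)} \Big).
\]
In each term, the second factor has upper index $r+s-k-1 \geq s$ and still lives in $V(\varpi_i)^*, V(\varpi_i)$, so when $s > m_i$ it lies in $I_0^\lambda$.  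Leibniz then extends this to all functions and gives $\{I_0^\lambda, \cO(G_1[[t^{-1}]])\} \subseteq I_0^\lambda$.

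For the reverse inclusion I would specialize the closed formula at $r=1$ and $(\beta,v) = (v_i^*, v_i)$, yielding
\[
\{\Delta_{\beta',v'}^{(1)}, \Delta_{v_i^*, v_i}^{(s)}\} = \sum_a \Big( (J_a\beta')(v')\, \Delta_{J^a v_i^*, v_i}^{(s)} - \beta'(J_a v')\, \Delta_{v_i^*, J^a v_i}^{(s)} \Big).
\]
As $\beta', v'$ range over bases in the fundamental representations, the right-hand side spans every linear combination of $\Delta_{X v_i^*, v_i}^{(s)}$ and $\Delta_{v_i^*, Y v_i}^{(s)}$ with $X, Y \in \fg$; iterating, and using that $V(\varpi_i)$ is generated from $v_i$ under $U(\fg)$ and $V(\varpi_i)^*$ from $v_i^*$, one then reaches every $\Delta_{\beta,v}^{(s)}$ inside $J_0^\lambda$.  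The main obstacle I anticipate is disentangling the two linear contributions that appear simultaneously on the right-hand side (one acting on the dual index, one on the primal index); this is a controlled finite-dimensional linear-algebra problem inside $V(\varpi_i)^* \otimes V(\varpi_i)$, solvable either by choosing $(\beta', v')$ in auxiliary representations so that the relevant coefficient matrices have full rank, or by an inductive argument along a weight filtration.
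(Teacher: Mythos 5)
The paper does not actually prove this proposition; it cites \cite[Proposition 2.15]{KWWY}, so you are reconstructing the KWWY argument. Your overall structure — introduce the ordinary ideal $I_0^\lambda$ generated by all $\Delta^{(s)}_{\beta,v}$ with $s>m_i$, prove $J_0^\lambda\subseteq I_0^\lambda$ by showing $I_0^\lambda$ is Poisson-closed, and prove $I_0^\lambda\subseteq J_0^\lambda$ by producing each $\Delta^{(s)}_{\beta,v}$ via brackets — is the right strategy and matches KWWY in spirit. The telescoping computation is correct, $\Delta^{(0)}_{\beta',v'}$ is indeed Poisson-central, and in each telescoped term the second factor carries exponent $\geq s>m_i$ and stays in the same $V(\varpi_i^\vee)^*\otimes V(\varpi_i^\vee)$; combined with Leibniz this gives $\{I_0^\lambda,\cO(\Gr_0)\}\subseteq I_0^\lambda$, so the first inclusion is fine.

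The second inclusion contains a genuine error as written. Since $(J_a\beta')(v') = -\beta'(J_a v')$ (the dual action), your $r=1$ bracket collapses to
\[
\{\Delta^{(1)}_{\beta',v'},\,\Delta^{(s)}_{v_i^*,v_i}\}
= -\sum_a \beta'(J_a v')\Bigl(\Delta^{(s)}_{J^a v_i^*,\,v_i} + \Delta^{(s)}_{v_i^*,\,J^a v_i}\Bigr),
\]
so the two summands come with a \emph{locked} coefficient, and what you obtain is $\Delta^{(s)}_{X\cdot(v_i^*\otimes v_i)}$ for $X\in\fg$ acting \emph{diagonally} on the tensor — not independent $\Delta^{(s)}_{Xv_i^*,v_i}$ and $\Delta^{(s)}_{v_i^*,Yv_i}$ as you claim. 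Your first proposed remedy (picking $(\beta',v')$ in auxiliary representations to make the coefficient matrices full rank) cannot help: the relation $(J_a\beta')(v')=-\beta'(J_av')$ is structural and holds in any representation, so the two contributions can never be decoupled by varying $(\beta',v')$. The gap is nonetheless closable, and your second suggestion (induction along a filtration) is the right instinct. The key fact is that $v_i^*\otimes v_i$ is a cyclic vector for the diagonal $\fg$-action on $V(\varpi_i^\vee)^*\otimes V(\varpi_i^\vee)$: since $\fn^+v_i=0$ and $v_i^*$ is a lowest-weight vector of the dual, one first gets $U(\fn^+)\cdot(v_i^*\otimes v_i)=V(\varpi_i^\vee)^*\otimes v_i$, and then, writing $X\cdot(\beta\otimes w)=X\beta\otimes w+\beta\otimes Xw$ and subtracting the already-obtained term $X\beta\otimes w$, one inductively climbs $V(\varpi_i^\vee)^*\otimes U(\fg)_{\leq k}v_i$ to fill out all of $V(\varpi_i^\vee)^*\otimes V(\varpi_i^\vee)$. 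Combined with the surjectivity of $(\beta',v')\mapsto \sum_a\beta'(J_av')J^a$ onto $\fg$ for any faithful $V'$, this yields every $\Delta^{(s)}_{\beta,v}\in J_0^\lambda$ and closes the argument. So: correct plan, correct first half, but the second half needs the incorrect spanning claim replaced by the cyclic-vector argument for the \emph{diagonal} $\fg$-module $V^*\otimes V$.
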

In the case when $\lambda=n\varpi_1$, then $m_i=i$, and we have in particular that $J_0^{n\varpi_1}$ contains $\Delta_{\beta,v}^{(s)}$, where $s>1$ and $\beta,v$ range over bases for $V(\varpi_1^\vee)^*$ and $V(\varpi_1^\vee)$.  It's easy to see that these elements are sufficient to generate the whole ideal.
Therefore we have
$$
\cY^{n\varpi_1}_0 \subset I_n +t^{-1}\Matnxn,
$$
where $\Matnxn$ denotes the affine space of $n\times n$ matrices.  We use this to define an embedding $\cY^{n\varpi_1}_0 \hookrightarrow \Matnxn$, by $I_n+t^{-1}X \mapsto X$.  The condition $\det(I_n+t^{-1}X)=1$ implies that the image of this map is precisely $\cN$, the nilpotent cone of $\fg$.  Therefore we have an isomorphism of schemes $\cY^{n\varpi_1}_0 \cong \cN$.  In particular, the reducedness conjecture is true and we have $ \cY^{n\varpi_1}_0 = \Gr^{n\varpi_1}_0 $.

\subsection{Nilpotent orbit closure}
Let $\lambda$ be a dominant coweight such that $0 < \lambda \leq n \varpi_1$. Recall that we have the expansions $\lambda = \sum \lambda_j \varpi_{j^*}= \sum m_i \alpha_{i^*}
$. We form the following partition, written in exponential notation:
\begin{align}
  \label{eq:4}
  \bv = 1^{\lambda_{n-1}} 2^{\lambda_{n-2}} \cdots (n-1)^{\lambda_{1}}
\end{align}
The condition $0 < \lambda \leq n \varpi_1$ implies that $\sum j \lambda_{n-j} = n$, i.e., that $\bv$ is a partition of $n$.

Let $\bu = \bv^T$ be the conjugate partition. Because $\lambda$ lies in the coroot lattice, we can also expand:

\begin{Lemma}
Let us write $\bu = (\bu_1 \geq \bu_2 \geq \cdots \geq \bu_{n-1})$. Then:
\begin{align}
  \label{eq:6}
  m_{n-i} = \bu_1 + \cdots + \bu_i - i
\end{align}
\end{Lemma}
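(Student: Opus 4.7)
The plan is to identify both sides of the identity as the unique solution to the same discrete Dirichlet problem. Define $p_i := m_{n-i}$ for $0 \leq i \leq n$ (using the convention $m_0 = m_n = 0$) and $q_i := \bu_1 + \cdots + \bu_i - i$ (with $\bu_s := 0$ for $s \geq n$). The goal is then to show $p_i = q_i$ for all $i$.

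First, I would pair $\lambda = \sum_i m_i \alpha_{i^*}$ against $\alpha_j^\vee$ to obtain $\lambda_{j^*} = \sum_i m_i C_{j,i^*}$, where $C$ is the Cartan matrix of $A_{n-1}$. Since $C$ is tridiagonal and $i^* = n-i$, this simplifies to $\lambda_k = 2m_k - m_{k-1} - m_{k+1}$ for $1 \leq k \leq n-1$. Substituting $k = n - i$ yields the recurrence $2 p_i - p_{i-1} - p_{i+1} = \lambda_{n-i}$.

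Next, I would observe that the number of parts of $\bv$ equal to $s$ is $\lambda_{n-s}$ by construction, and also equals $\bu_s - \bu_{s+1}$ by the definition of the conjugate partition. Together with $q_i - q_{i-1} = \bu_i - 1$ this gives $2 q_i - q_{i-1} - q_{i+1} = \bu_i - \bu_{i+1} = \lambda_{n-i}$. Hence $p$ and $q$ satisfy the same second-order linear recurrence on $1 \leq i \leq n-1$, and its solutions are uniquely determined by the boundary values at $i = 0$ and $i = n$ since the corresponding linear system is governed by the invertible $A_{n-1}$ Cartan matrix.

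Finally, $p_0 = q_0 = 0$ and $p_n = m_0 = 0$ are immediate; the one nontrivial boundary condition is $q_n = |\bu| - n = 0$. This is exactly where the hypothesis $\lambda \leq n\varpi_1$ enters: it forces $|\bv| = n$, hence $|\bu| = n$ as conjugate partitions have the same size. The main conceptual bridge to spot is the double interpretation of $\lambda_{n-s}$ (as a coefficient in the coroot expansion versus as a column-length difference for $\bv$); once it is in hand the rest is a routine matching of recurrences, with bookkeeping around the involution $i \mapsto i^*$ as the main source of possible error.
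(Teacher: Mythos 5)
The paper states this lemma without proof (it is treated as a routine bookkeeping identity), so there is no "paper's own argument" to compare against. Your proof is correct: the recurrence $2p_i - p_{i-1} - p_{i+1} = \lambda_{n-i}$ follows from expanding $\langle\lambda,\alpha_j^\vee\rangle$ two ways, the recurrence $2q_i - q_{i-1} - q_{i+1} = \bu_i - \bu_{i+1} = \lambda_{n-i}$ follows from the conjugate-partition interpretation, and the boundary conditions $p_0=q_0=p_n=q_n=0$ (the last via $|\bu|=|\bv|=n$, which the paper itself records as a consequence of $0<\lambda\le n\varpi_1$) pin down the common solution via invertibility of the $A_{n-1}$ Cartan matrix. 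One small point of hygiene: you attribute $|\bv|=n$ to the hypothesis $\lambda\le n\varpi_1$ alone, but strictly it also uses $\lambda>0$ (together they force $m_1=\langle\lambda,\varpi_{n-1}^\vee\rangle=1$, and $|\bv|=\sum_j j\lambda_{n-j}=n\,m_1=n$); this is exactly the two-sided hypothesis the paper imposes, so no real issue. An equally short alternative is to compute directly, using $\bu_\ell=\sum_{s\ge\ell}\lambda_{n-s}$, that $\bu_1+\cdots+\bu_i=\sum_j\min(i,j)\lambda_{n-j}$ and compare with the explicit inverse Cartan matrix formula $m_{n-i}=\sum_j\bigl(\min(i,j)-ij/n\bigr)\lambda_{n-j}$; the two agree precisely when $\sum_j j\lambda_{n-j}=n$, recovering the same boundary condition. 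Your Dirichlet-problem phrasing avoids the explicit inverse Cartan matrix, which is a matter of taste.
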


Using the partition $ \bu $, let us consider the nilpotent orbit closure $\overline{\OO_\bu} \subseteq \cN$, where $ \OO_\bu $ denotes the orbit of nilpotent matrices whose Jordan form is given by $ \bu$.
\begin{Proposition} \label{prop:reduced-slice-and-nilpotent-orbit-closure}
The isomorphism $ \Gr^{n\varpi_1}_0 = \cY^{n\varpi_1}_0 \cong \cN $ identifies $ \Gr^\lambda_0 $ with $\overline{\OO_\bu} $.
\end{Proposition}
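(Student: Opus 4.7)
The plan is to exhibit an explicit nilpotent $X_0 \in \OO_\bu$ whose image $I+t^{-1}X_0$ under the isomorphism $\cY^{n\varpi_1}_0 \cong \cN$ of the previous subsection lies in $\Gr^\lambda_0$, and then use $SL_n$-equivariance together with a dimension count to conclude that the closed subvarieties $\Gr^\lambda_0$ and $\overline{\OO_\bu}$ of $\cN$ coincide.

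First I would take $X_0 \in \fsl_n$ to be the nilpotent in Jordan canonical form whose Jordan blocks have sizes $\bu_1, \ldots, \bu_\ell$, the parts of $\bu$.  Then $g := I_n+t^{-1}X_0$ is block-diagonal under the embedding $\prod_{i=1}^\ell SL_{\bu_i} \hookrightarrow SL_n$, and each block $I_{\bu_i}+t^{-1}J_{\bu_i}$ is the regular-nilpotent representative of $\cY^{\bu_i\varpi_1}_0 \cong \cN_{\bu_i}$ (applying the previous subsection to $SL_{\bu_i}$), hence it lies in the open orbit $\Gr^{\bu_i\varpi_1}_{SL_{\bu_i}}$.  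By functoriality of the affine Grassmannian under the block-diagonal inclusion, $g$ therefore belongs to $\Gr^\mu_{SL_n}$ for the dominant coweight $\mu$ obtained by sorting the concatenation of the block coweights.  In $GL$-coordinates each block contributes $(\bu_i-1,-1,\ldots,-1)$, whose sorted concatenation is $(\bu_1-1,\ldots,\bu_\ell-1,-1,\ldots,-1) = \bu - (1^n)$; translating into $SL_n$-fundamental-coweight coordinates (using the preceding Lemma, equivalent to $\bu_k = \sum_{j \leq n-k}\lambda_j$) identifies this with $\lambda$.  Hence $g \in \Gr^\lambda_0$.

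The subgroup $G = SL_n \subset G[t]$ preserves both $\Gr_0 = G_1[[t^{-1}]]$ (acting by conjugation, since $G$ normalizes $G_1[[t^{-1}]]$) and $\ol{\Gr^\lambda}$ (since $G \subset G[t]$).  Under the isomorphism $\cY^{n\varpi_1}_0 \cong \cN$ the induced action $h\cdot(I+t^{-1}X) = I + t^{-1}(hXh^{-1})$ is the standard $SL_n$-conjugation on $\cN$.  Thus $\OO_\bu = G\cdot X_0 \subseteq \Gr^\lambda_0$, and passing to the closure inside $\Gr^\lambda_0 \subseteq \Gr^{n\varpi_1}_0 \cong \cN$ (which is closed as the intersection of $\ol{\Gr^\lambda}$ with the open subset $\Gr_0 \subset \Gr$) gives $\overline{\OO_\bu} \subseteq \Gr^\lambda_0$.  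Both sides are irreducible, and a direct computation
\[
\dim \overline{\OO_\bu} = n^2 - \sum_k \bv_k^2 = \sum_j \lambda_j\, j(n-j) = \langle 2\rho^\vee, \lambda\rangle = \dim \Gr^\lambda_0,
\]
using the partition identity $\sum_j j\lambda_{n-j}=n$, shows the dimensions agree, so the containment is an equality.

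The main obstacle is the first step: explicitly pinning down the $SL_n$-orbit containing $I+t^{-1}X_0$.  The block-diagonal reduction is conceptually clean but requires careful bookkeeping between $SL_n$- and $GL_n$-coweight conventions (via the normalization $\bu-(1^n)$) and invokes functoriality of the affine Grassmannian under the block-diagonal subgroup inclusion.
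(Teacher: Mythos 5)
The paper states this proposition without proof, so there is nothing to compare against directly; it is presumably cited as a well-known fact (essentially due to Lusztig). Your proof is correct and is, as far as I can tell, the standard argument: exhibit a Jordan-form representative of $\OO_\bu$ inside the slice, use $SL_n$-equivariance (conjugation on the slice, since $SL_n \subset G[t]$ normalizes $G_1[[t^{-1}]]$), and then match dimensions.

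A few points worth tightening. First, the formula $\bu_k = \sum_{j\le n-k}\lambda_j$ that you extract from the Lemma uses (implicitly) that $m_1=1$; this holds precisely because $0 < \lambda \le n\varpi_1$ forces the coefficient of $\alpha_{n-1}$ in $\lambda$ to be exactly $1$, but it deserves a line. Second, the phrase ``closed as the intersection of $\ol{\Gr^\lambda}$ with the open subset $\Gr_0$'' is slightly off as worded --- such an intersection is closed \emph{in $\Gr_0$}, and hence in $\Gr_0^{n\varpi_1}\cong\cN$, which is the statement you actually need. Third, the final step requires irreducibility of $\Gr_0^\lambda$ (a closed subvariety of the correct dimension containing an irreducible one of the same dimension need not equal it unless irreducibility is known); this is a standard fact about the slices but should be cited rather than taken silently. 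Finally, one might instead avoid the dimension count altogether by reversing the containment argument stratum-by-stratum: the same block-diagonal computation applied to each $\mu\le\lambda$ identifies the dense stratum $\Gr^\mu\cap\Gr_0$ of $\Gr^\mu_0$ with $\OO_{\bu(\mu)}$, so the two stratifications of $\cN$ (by $G[t]$-orbits and by nilpotent orbits) match term by term; either route works.

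Overall: valid proof, minor gaps in exposition as noted, and the key idea (block-diagonal regular-nilpotent pieces plus $SL_n$-equivariance) is the right one.
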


Using results of Weyman \cite{W}, we will now see that under the identification $ \cY^{n\varpi_1}_0 \cong \cN$, the subscheme $\cY^\lambda_0$ is equal to nilpotent orbit closure $ \overline{\OO_\bu} $ with its {\it reduced} induced scheme structure.

\subsection{Appearance of Weyman's equations}
Under the isomorphism $ \cY^{n\varpi_1}_0 \cong \cN $, we can identify $\cY^\lambda_0$ as a subscheme of the nilpotent cone.  More precisely, $\cY_0^\lambda$ is identified with the subscheme of $\cN$ given by the functions
$$
\{f_{C,D}^{(s)}\mid 1\leq k<n, |C|=|D|=k, \text{ and } s>m_k\},
$$
where $f_{C,D}^{(s)}(X)=\Delta_{C,D}^{(s)}(I_n+t^{-1}X)$.  As functions on $ \cN $, $ f_{C,D}^{(s)} $ is a sum of $s \times s$ minors.

Note that the ideal of $ \cN $ as a subscheme of $ \Matnxn $ is generated by (for each $ p $) the sum of all principal $ p \times p $ minors.

Let $W_{k,p} = \text{span} \{ f_{C,D}^{(p)} \suchthat |C| = |D| = k \}$, and let $U_{0,p}$ be the one-dimensional space spanned by the sum of all principal $p\times p$ minors.  We  then have that the ideal of $\cY^\lambda_0$ in $\cO(\Matnxn)$ is generated by
\begin{equation}
\label{eq:Yideal}
\bigoplus_{k,p>m_k} W_{k,p} \oplus \bigoplus_{p=1}^n U_{0,p}.
\end{equation}

Let $M_p \subset \cO(\Matnxn)$ be span of all $p\times p$ minors, and set $E=\bC^n$.  We let $GL_n$ act on $\cO(\Matnxn)$ by $(g\cdot f)(A)=f(g^{-1}Ag)$.  Under this action $M_p \cong \Lambda^p E \otimes \Lambda^p E^*$.  By the Pieri rule, we have:
\begin{align} \label{eq:10}
  M_p = \bigoplus_{0\leq k \leq \min{(p,n-p)}} U_{k,p}
\end{align}
where $U_{k,p}\cong S_{\alpha_k} E$, the Schur module of highest weight $\alpha_k=(1^k,0^{n-2k},(-1)^{k})$.
Note that in the case where $k=0$ this notation agrees with the  definition of $U_{0,p}$ in (\ref{eq:Yideal}).

It is clear that $W_{k,p}$ is a subrepresentation of $\cO(\Matnxn)$. Moreover, each $f_{C,D}^{(p)}$ is a weight vector for the torus of diagonal matrices in $GL_n$ with weight:
\begin{align}
  \label{eq:13}
  \wt f_{C,D}^{(p)} = -\sum_{i \in C} \varepsilon_i + \sum_{j \in D} \varepsilon_j
\end{align}
Suppose now that $0\leq k\leq \min{(p,n-p)}$, and set $C_0=\{k+1,...,n\}$ and $D_0=\{1,...,n-k\}$.  Then we have
$
   \wt f_{C_0,D_0}^{(p)} = \alpha_k.
$
 Furthermore $f_{C_0,D_0}^{(p)}$ is fixed by the unipotent upper triangular matrices in $GL_n$. Therefore, it generates a copy of $S_{\alpha_k}E$. Since $W_{n-k,p} \subset M_p$, by \eqref{eq:10} we see that
\begin{equation}
\label{eq:inclusion}
U_{k,p} \subset W_{n-k,p}.
\end{equation}

 We recall Weyman's Theorem on nilpotent orbit closures, which by the above lemma can be stated as follows:

\begin{Theorem}[\mbox{\cite[Theorem 4.6]{W}}]
The ideal of  $\overline{\OO_\bu}$ is generated by the following:
\begin{align}
  \label{eq:11}
   \bigoplus_{1 \leq i \leq n; p > m_{n-i}}  U_{i,p} \oplus \bigoplus_{1 \leq p \leq n} U_{0,p}
\end{align}
\end{Theorem}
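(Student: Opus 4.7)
The statement is a restatement in the paper's notation of Weyman's original theorem (\cite[Theorem 4.6]{W}), so the proof amounts to showing that the formulation (\ref{eq:11}) matches Weyman's original. The plan is to invoke Weyman's theorem directly and translate indices via the preceding lemma.

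Weyman's theorem describes the ideal $I(\overline{\OO_\bu}) \subset \cO(\Matnxn)$ in $GL_n$-equivariant terms as a specific direct sum of Schur components in the ambient polynomial ring, with the range of components controlled by the partition $\bu$. In the setup here, the space of $p \times p$ minors $M_p$ decomposes as in (\ref{eq:10}) into isotypic pieces $U_{k,p} \cong S_{\alpha_k} E$, and the generators of the ideal of $\overline{\OO_\bu}$ sit naturally inside these pieces. Weyman's formula bounds which $U_{k,p}$ lie in the ideal in terms of the entries of $\bu$, and the lemma $m_{n-i} = \bu_1 + \cdots + \bu_i - i$ translates this bound into the threshold $p > m_{n-i}$ appearing in (\ref{eq:11}).

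The remaining ingredient is the $U_{0,p}$ summands. These are, up to sign, the coefficients of the characteristic polynomial of $X \in \Matnxn$, and their simultaneous vanishing cuts out the nilpotent cone $\cN$; they appear in the ideal for all $1 \leq p \leq n$ simply because $\overline{\OO_\bu} \subseteq \cN$. So modulo $U_{0,\bullet}$, the claim reduces to identifying the ideal of $\overline{\OO_\bu}$ inside $\cO(\cN)$ with the piece coming from $\bigoplus_{i,p>m_{n-i}} U_{i,p}$.

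The main obstacle, insofar as there is one, is careful bookkeeping of conventions: confirming that the partition appearing in Weyman's theorem is the Jordan partition $\bu$ rather than the multiplicity partition $\bv$, and that the Schur functor labels match the $S_{\alpha_i} E$ convention used in this paper. No new geometric or algebraic input is required beyond Weyman's theorem itself and the index-matching lemma.
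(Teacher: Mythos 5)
Your proposal is correct and matches the paper's own treatment: the paper does not prove this theorem but simply quotes Weyman's result and translates the index thresholds through the preceding lemma $m_{n-i} = \bu_1 + \cdots + \bu_i - i$, exactly as you describe.
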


By Equations (\ref{eq:Yideal}) and (\ref{eq:inclusion}), we see that Weyman's Theorem implies that $ I(\overline{\OO_\bu}) \subset I(\cY^{\lambda}_0) $. By \cite[Corollary 2.16]{KWWY} $V(J^{\lambda}_0)=\Gr_0^\lambda $ and $ \Gr_0^\lambda = \overline{\OO_\bu}$ by Proposition \ref{prop:reduced-slice-and-nilpotent-orbit-closure}.  Therefore these ideals have the same radical, and since $I(\overline{\OO_\bu})$ is radical we obtain the following isomorphism of subschemes:
\begin{Proposition}{\label{prop:isomorphism-of-slice-to-nilpotent-orbit-closure}}
  Let $\lambda$ be a dominant coweight for $SL_n$ with $\lambda \leq n \varpi_1$, and let $\bu$ be the corresponding partition of $n$. Then we have
\begin{align}
  \label{eq:01}
  \cY^{\lambda}_0 = \overline{\OO_\bu}
\end{align}
as subschemes of $n \times n$ matrices.
\end{Proposition}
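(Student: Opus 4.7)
The plan is to prove the equality of ideals $I(\cY^\lambda_0) = I(\overline{\OO_\bu})$ in $\cO(\Matnxn)$, which directly gives the claimed scheme-theoretic identification. I would proceed by establishing two opposite inclusions.

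For the forward inclusion $I(\overline{\OO_\bu}) \subset I(\cY^\lambda_0)$, I would combine Weyman's theorem with the comparison \eqref{eq:inclusion}. Weyman's generators are the spaces $U_{i,p}$ for $p > m_{n-i}$ together with all $U_{0,p}$. The inclusion \eqref{eq:inclusion} gives $U_{i,p} \subset W_{n-i,p}$, and by \eqref{eq:Yideal} the space $W_{n-i,p}$ with $p > m_{n-i}$ appears among the generators of $I(\cY^\lambda_0)$; the principal-minor generators $U_{0,p}$ appear verbatim in both descriptions. Hence every generator of $I(\overline{\OO_\bu})$ lies in $I(\cY^\lambda_0)$.

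For the reverse inclusion $I(\cY^\lambda_0) \subset I(\overline{\OO_\bu})$, I would pass through radicals. By \cite[Corollary 2.16]{KWWY} the vanishing locus $V(J_0^\lambda)$ equals $\Gr_0^\lambda$, and by Proposition \ref{prop:reduced-slice-and-nilpotent-orbit-closure} this set coincides with $\overline{\OO_\bu}$ inside $\cN$. Thus $\sqrt{I(\cY^\lambda_0)} = I(\overline{\OO_\bu})$. Since $\overline{\OO_\bu}$ is a variety, $I(\overline{\OO_\bu})$ is already radical, so $I(\cY^\lambda_0) \subset \sqrt{I(\cY^\lambda_0)} = I(\overline{\OO_\bu})$. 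Together with the first step, this forces equality of ideals, hence $\cY^\lambda_0 = \overline{\OO_\bu}$ as subschemes of $\Matnxn$.

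There is no substantive obstacle remaining at this stage: the proof is a formal assembly of Weyman's description of $I(\overline{\OO_\bu})$ with the set-theoretic identification of the slice. The genuine content sits earlier in the section — in the representation-theoretic computation showing $U_{k,p} \subset W_{n-k,p}$ (via the weight formula \eqref{eq:13} and the highest-weight vector $f_{C_0,D_0}^{(p)}$) and in Proposition \ref{prop:reduced-slice-and-nilpotent-orbit-closure} identifying $\Gr_0^\lambda$ with $\overline{\OO_\bu}$ — both of which are in hand by the time we state this proposition.
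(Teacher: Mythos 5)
Your proof is correct and takes essentially the same route as the paper: the forward inclusion $I(\overline{\OO_\bu}) \subset I(\cY^\lambda_0)$ follows from Weyman's theorem combined with \eqref{eq:inclusion} and \eqref{eq:Yideal}, and the reverse inclusion follows by comparing radicals via \cite[Corollary 2.16]{KWWY} and the set-theoretic identification in Proposition \ref{prop:reduced-slice-and-nilpotent-orbit-closure}, using that $I(\overline{\OO_\bu})$ is radical.
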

 This shows that $\cY^{\lambda}_0$ is reduced for any $\lambda<n\varpi_1$, proving Proposition \ref{PropWeyman}.


\section{An isomorphism of slices}
\label{sec:iso}
\subsection{Some varieties of interest}
For the moment consider  the general setting where $G$ is a semisimple group, and $\lambda, \mu$ are dominant coweights for $G$ with $\lambda \geq \mu$.  In this case we denote $\lambda_i  = \langle \lambda^\ast, \alpha_i^\vee\rangle$, $\mu_i = \langle \mu^\ast, \alpha_i^\vee\rangle$ and $m_i = \langle \lambda^\ast-\mu^\ast, \varpi_i^\vee\rangle$.

As in \cite[Appendix B]{BFN2}, consider the subgroup
\begin{equation}
\label{eq: Gmu}
G_\mu := \left\{ g\in G_1[[t^{-1}]] : t^{-w_0\mu} g t^{w_0 \mu} \in G_1[[t^{-1}]] \right\},
\end{equation}
which has the property that there is an isomorphism $G_\mu \stackrel{\sim}{\longrightarrow} \Gr_\mu$ defined by $g\mapsto g t^{w_0 \mu}$.  Hence the translated subset \begin{equation}
\label{eq: cosets for Grmu}
\cW_\mu := G_\mu \cdot t^{w_0\mu} \subset G((t^{-1}))
\end{equation}
maps bijectively onto $\Gr_\mu$ under the quotient map $G((t^{-1})) \rightarrow \Gr_G$.  It is naturally an affine scheme of infinite type.  

Consider a closed subscheme $\cW_\mu^\lambda$ of $\cW_\mu$, which is defined by imposing the following conditions on $g t^{w_0\mu} \in \cW_\mu$:
\begin{quote}
For every dominant weight $\tau^\vee$ of $G$, the valuation of $g t^{w_0 \mu}$ acting on $V(\tau^\vee) (( t^{-1}))$ is greater than or equal to $\langle \lambda,  w_0 \tau^\vee \rangle $.
\end{quote}
The above can be understood in terms of matrix coefficients, like in Section \ref{SecNot}. Note that in \cite{KMW} $\overline{\cY^\lambda}$ is defined by these same conditions on valuations, but applied to a coset representative $[ h]\in \Gr$ (i.e. with $h$ in place of $g t^{w_0 \mu}$ above).  Since $\cW_\mu$ provides a choice of coset representatives for $\Gr_\mu$, this implies:
\begin{Proposition}
\label{prop: slice via coset reps}
The quotient map $G((t^{-1})) \rightarrow \Gr_G$ induces an isomorphism of schemes $\cW_\mu^\lambda \cong \cY_\mu^\lambda$.
\end{Proposition}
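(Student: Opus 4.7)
The plan is essentially to unwind definitions and check that the defining ideals of $\cW_\mu^\lambda$ and $\cY_\mu^\lambda$ correspond under the scheme isomorphism $\cW_\mu \stackrel{\sim}{\to} \Gr_\mu$ induced by the quotient map. Since both subschemes are cut out by the same valuation conditions on matrix coefficients, the proof is largely a book-keeping exercise once the identification of coset representatives is in place.

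First I would record that the map $g \mapsto g t^{w_0\mu}$ gives an isomorphism of schemes $G_\mu \stackrel{\sim}{\to} \Gr_\mu$ (already asserted after (\ref{eq: Gmu})), which is equivalent to saying that the restriction of the quotient map $G((t^{-1})) \to \Gr$ to $\cW_\mu \subset G((t^{-1}))$ is a scheme isomorphism onto $\Gr_\mu$. Under this isomorphism, each point of $\Gr_\mu$ has a distinguished coset representative of the form $g t^{w_0\mu} \in \cW_\mu$, and pullback along this map identifies $\cO(\Gr_\mu)$ with $\cO(\cW_\mu)$.

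Next I would compare the defining ideals. By construction, $\cW_\mu^\lambda$ is cut out in $\cW_\mu$ by requiring that, for every dominant weight $\tau^\vee$ of $G$, the action of $g t^{w_0\mu}$ on $V(\tau^\vee)((t^{-1}))$ has valuation at least $\langle \lambda, w_0 \tau^\vee\rangle$; concretely, these conditions are formulated at the scheme level as polynomial conditions on the matrix coefficients $\Delta_{\beta,v}(g t^{w_0\mu})$. As noted just before the statement, in \cite{KMW} the subscheme $\overline{\cY^\lambda} \subset \Gr$ is defined by exactly the same valuation conditions on an arbitrary coset representative $[h] \in \Gr$. Since $\cW_\mu$ is itself a family of coset representatives for $\Gr_\mu$, the two ideals correspond under the isomorphism $\cO(\Gr_\mu) \cong \cO(\cW_\mu)$. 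Combined with the fact that $\cY_\mu^\lambda$ is taken in the present paper to be $\overline{\cY^\lambda} \cap \Gr_\mu$, this yields the desired scheme isomorphism $\cW_\mu^\lambda \cong \cY_\mu^\lambda$.

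The only potential subtlety is to ensure that the valuation conditions are interpreted consistently at the scheme level rather than only set-theoretically; but since both sides are formulated in terms of the same coefficient functions $\Delta_{\beta,v}^{(s)}$ (equivalently, coefficients of minors in the $SL_n$ case) this compatibility is automatic. I do not anticipate any serious obstacle: the content of the proposition is really the observation that $\cW_\mu$ provides a uniform and explicit family of coset representatives on which the valuation conditions from \cite{KMW} can be read off directly.
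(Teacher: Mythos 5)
Your proposal is correct and takes essentially the same route as the paper: both argue that $\cW_\mu^\lambda$ is the pullback of $\overline{\cY^\lambda}$ along the isomorphism $\cW_\mu \xrightarrow{\sim} \Gr_\mu$, so the subschemes correspond. The paper merely compresses your ideal-matching argument into a single fibre-product diagram.
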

\begin{proof}
This follows since $\cW_\mu^\lambda$ is the fibre product:
\[
\begin{tikzcd}
\cW_\mu^\lambda \rar \dar & \overline{\cY^\lambda} \dar[hook] \\
\cW_\mu \rar[hook] & \Gr_G
\end{tikzcd}
\]
where the bottom arrow comes from the quotient map $G((t^{-1})) \rightarrow \Gr_G$.

In other words, the definition of $\cW_\mu^\lambda$ is exactly a translation of the conditions defining the scheme-theoretic intersection $\Gr_\mu \cap \overline{\cY^\lambda}$ under the isomorphism $\cW_\mu \rightarrow \Gr_\mu$.
\end{proof}

\subsection{Explicit description of certain slices}

We return to the case where $G$ is a special linear group.  Fix a dominant $SL_n$-coweight $\lambda\geq 0$, and write $ \lambda = \sum \lambda_i \varpi_{i^\ast} = \sum m_i \alpha_{i^\ast} $.  Set $k = m_1$.  Then $\lambda \leq k n \varpi_1$, and this is the minimal value of $k$ such that this inequality holds.

As in Section \ref{section: proof overview}, we consider $SL_n \subset SL_{kn}$ corresponding to the inclusion $\tau$ of Dynkin diagrams $\{ 1,\ldots, n-1\} \subset \{1,\ldots, kn-1\}$. Define a map $\tau$ taking $SL_n$ coweights to $SL_{kn}$ coweights, which extends $\varpi_i \mapsto \varpi_i$ by linearity.

Our goal is to explicitly describe an isomorphism:
\begin{equation}
\label{eq: goal iso}
^{SL_n} \cY_0^\lambda \ \ \cong \ \ ^{SL_{kn}} \cY^{\tau(\lambda)}_{k \varpi_n}
\end{equation}
Using Proposition \ref{prop: slice via coset reps}, we will work exclusively with $\cW_\mu^\lambda$.  For simplicity, we will abuse notation and write continue to write $\cY_\mu^\lambda$.

\subsubsection{Case of $SL_n$} Since $^{SL_n} \cW_0 = (SL_n)_1[[t^{-1}]]$ and $\langle \lambda, w_0 \varpi_i^\vee \rangle = - m_i $, by Proposition \ref{prop: slice via coset reps} we have that
\begin{equation}
\label{eq: n presentation}
^{SL_n} \cY_0^\lambda = \left\{ g\in (SL_n)_1[[t^{-1}]] : \begin{matrix} \text{the valuation of any } i\times i \text{ minor}  \\ \text{of } g \text{ is } \geq - m_i\end{matrix} \right\}
\end{equation}
Indeed, it suffices to consider only $\tau^\vee = \varpi_i^\vee$ the fundamental weights for $SL_n$.

\subsubsection{Case of $SL_{kn}$}
To describe $^{SL_{kn}} \cY_{k\varpi_n}^{\tau(\lambda)}$, it will be convenient to write elements of $SL_{kn}$ as block matrices
$$ \left( \begin{array}{c|c} a & b \\ \hline c & d \end{array} \right) $$
where $a$ is $(kn-n)\times (kn-n)$, $d$ is $n\times n$, etc. With this convention, we have 
$$ t^{w_0 (k \varpi_n)} = \left( \begin{array}{c|c} t^{-1} I & 0 \\ \hline 0 & t^{k-1} I \end{array} \right), $$
and following (\ref{eq: Gmu}) and (\ref{eq: cosets for Grmu}) we find that
\begin{equation}
^{SL_{kn}} \cW_{k\varpi_n} = \left\{ \left( \begin{array}{c|c} a & b \\ \hline c & d \end{array} \right) \in SL_{kn}((t^{-1})) : \begin{array}{l} a \in t^{-1} I + t^{-2} M_{(k-1)n\times (k-1)n}[[t^{-1}]], \\ b\in t^{-2} M_{(k-1)n\times n}[[t^{-1}]], \\ c\in t^{-2} M_{n\times (k-1)n}[[t^{-1}]], \\ d\in t^{k-1} I + t^{k-2} M_{n\times n}[[t^{-1}]] \end{array} \right\}
\end{equation}

We can also describe $\tau(\lambda)$ more explicitly:
\begin{Lemma}
\label{lem:simpcomp}
We have:
\begin{itemize}
\item[(a)] $\tau(\lambda) = \sum_{i=1}^{n-1} m_{n-i} \alpha_i + k \varpi_n $

\item[(b)] $\tau(\lambda) \leq k n \varpi_1$
\end{itemize}
\end{Lemma}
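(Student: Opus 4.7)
The plan is to prove both parts by direct computation with the inverse Cartan matrices of $SL_n$ and $SL_{kn}$. The only subtlety is a boundary effect at node $n-1$, where the absent $\varpi_n$ of $SL_n$ becomes a nontrivial coweight of $SL_{kn}$; this effect is exactly what produces the $k\varpi_n$ term in (a) and what forces the inequality in (b).

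For (a), I would first rewrite $\lambda = \sum_{i=1}^{n-1} m_{n-i}\,\alpha_i$ in $SL_n$ (using $i^* = n-i$) and apply $\tau$ termwise. Since $\tau(\varpi_i) = \varpi_i$, for $1 \leq j \leq n-2$ the expression $\alpha_j = -\varpi_{j-1} + 2\varpi_j - \varpi_{j+1}$ is identical in $SL_n$ and $SL_{kn}$, giving $\tau(\alpha_j) = \alpha_j$. The one exception is $j = n-1$: in $SL_n$ we have $\alpha_{n-1} = 2\varpi_{n-1} - \varpi_{n-2}$ (since $\varpi_n = 0$ there), whereas in $SL_{kn}$, $\alpha_{n-1} = 2\varpi_{n-1} - \varpi_{n-2} - \varpi_n$, so $\tau(\alpha_{n-1}^{SL_n}) = \alpha_{n-1}^{SL_{kn}} + \varpi_n$. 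The resulting extra $\varpi_n$ appears with coefficient $m_{n-(n-1)} = m_1 = k$, yielding (a).

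For (b), I would expand $kn\varpi_1 - \tau(\lambda) = kn\varpi_1 - k\varpi_n - \sum_i m_{n-i}\,\alpha_i$ in the basis of simple coroots of $SL_{kn}$, using the standard formula $(C^{-1})_{ij} = \min(i,j)(kn - \max(i,j))/kn$. A direct calculation shows that the coefficients on $\alpha_j$ for $j \geq n$ cancel, leaving
\[
kn\,\varpi_1 - \tau(\lambda) = \sum_{i=1}^{n-1} (ki - m_i)\, \alpha_{n-i}.
\]
This reduces the problem to the inequalities $ki \geq m_i$ for $1 \leq i \leq n-1$.

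To establish these, I would exhibit the identity
\[
i\,\varpi_{n-1}^\vee - \varpi_{n-i}^\vee \;=\; \sum_{j=n-i+1}^{n-1}(i+j-n)\,\alpha_j^\vee
\]
in the $SL_n$ weight lattice (verified by comparing inverse-Cartan entries), which is visibly a non-negative integer combination of simple roots. Pairing with the dominant coweight $\lambda$ then gives $ki - m_i = \langle \lambda, i\varpi_{n-1}^\vee - \varpi_{n-i}^\vee\rangle \geq 0$. I expect no conceptual obstacle: the only real nuisance is bookkeeping the two involutions $i \mapsto i^*$ (which differ for $SL_n$ and $SL_{kn}$) and handling the boundary at node $n-1$ correctly.
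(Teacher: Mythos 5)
Your proof is correct. Part (a) matches the paper's argument exactly: both track the boundary effect at node $n-1$ where $\tau(\alpha_{n-1}) = \alpha_{n-1} + \varpi_n$ (in your notation, from the absence of $\varpi_n$ in $SL_n$), and note that this creates an extra $m_1\varpi_n = k\varpi_n$ term.

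For part (b) you take a genuinely different and somewhat more self-contained route. The paper's proof is terse: it uses the assertion $\lambda \leq kn\varpi_1$ (stated earlier, without proof, as a consequence of the choice $k = m_1$) and observes that the coefficient of $\alpha_{n-1}$ in $kn\varpi_1 - \lambda$ is $k - m_1 = 0$, so the difference lives in $\bigoplus_{i\leq n-2}\bN\alpha_i$, on which $\tau$ acts by the identity; hence $\tau(kn\varpi_1 - \lambda) = kn\varpi_1 - \tau(\lambda)$ is again a non-negative combination. You instead expand $kn\varpi_1 - \tau(\lambda)$ directly in the $SL_{kn}$ coroot basis, verify the cancellation for $j\geq n$, and reduce to the inequalities $ki\geq m_i$ ($1\leq i\leq n-1$), which you then prove via the correct coroot identity $i\varpi_{n-1}^\vee - \varpi_{n-i}^\vee = \sum_{j=n-i+1}^{n-1}(i+j-n)\alpha_j^\vee$ paired against the dominant $\lambda$. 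This is more computation than the paper's proof, but it has the virtue of actually \emph{proving} the inequality $\lambda\leq kn\varpi_1$ which the paper asserts without argument (the paper does sketch an alternative proof of $ki\geq m_i$, via the telescoping $m_i = \sum_{\ell<i}(m_{\ell+1}-m_\ell)\leq im_1$, but only later inside the proof of the main isomorphism). Both are fine; your version carries its own weight.
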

\begin{proof}
\mbox{}
\begin{itemize}
\item[(a)]  Observe that $\tau(\alpha_i) = \alpha_i$ for $1\leq i \leq n-2$, while $\tau(\alpha_{n-1}) = \alpha_{n-1}+\varpi_n$.  Since $k = m_1$ is the coefficient of $\alpha_{n-1}$ in $\lambda$, the claim follows.

\item[(b)] The difference $kn \varpi_1 - \lambda$ is a linear combination of $\alpha_1,\ldots, \alpha_{n-2}$ with non-negative coefficients: $\alpha_{n-1}$ does not appear.  Since $\tau(\alpha_i) = \alpha_i$ for $1\leq i \leq n-2$,
$$ \tau( kn \varpi_1 - \lambda) = kn \varpi_1 - \tau(\lambda)$$ 
is also a linear combination of $\alpha_1,\ldots, \alpha_{n-2}$ with non-negative coefficients. 
\end{itemize}
\end{proof}

To apply Proposition \ref{prop: slice via coset reps} to $^{SL_{kn}} \cY_{k\varpi_n}^{\tau(\lambda)}$, we must compute the pairings $\langle \tau(\lambda), w_0 \varpi_j^\vee \rangle$ for the fundamental weights $\varpi_j^\vee$ of $SL_{kn}$.  We do so by using Lemma \ref{lem:simpcomp}(a), together with the expansion
$$ k \varpi_n = \sum_{j=1}^{n-1} j (k-1) \alpha_j  + \sum_{j=n}^{kn} (kn-j) \alpha_j $$
Altogether, we find that $^{SL_{kn}} \cY_{k\varpi_n}^{\tau(\lambda)} \subset ^{SL_{kn}} \cW_{k\varpi_n}$ is the closed subscheme defined by the conditions
\begin{equation}
\label{eq: kn presentation}
^{SL_{kn}} \cY_{k\varpi_n}^{\tau(\lambda)} = \left\{ \left( \begin{array}{c|c} a & b \\ \hline c & d \end{array} \right) \in ^{SL_{kn}} \cW_{k\varpi_n} :  \begin{array}{l} \ \text{the valuation of any } j \times j \text{ minor is:} \\ \ (a)\ \geq -j ,  \text{ for } 1\leq j \leq kn-n, \\  \begin{array}{l}(b)\  \geq -m_{j-kn+n} - (kn -j)(k-1), \\ \ \ \text{ for } j > kn-n\end{array} \end{array}\right\}
\end{equation}

\subsection{The isomorphism}
We begin with the following observation regarding $^{SL_{kn}} \cY_{k\varpi_n}^{\tau(\lambda)}$:
\begin{Lemma}
The matrix coefficients of $a - t^{-1} I, b$ and $c$ are zero in $\CC[^{SL_{kn}} \cY_{k\varpi_n}^{\tau(\lambda)}]$.
\end{Lemma}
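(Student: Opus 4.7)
The plan is to derive the lemma directly from the $j=1$ case of the valuation conditions~(a) in~(\ref{eq: kn presentation}).  This case is available whenever $kn - n \geq 1$, i.e., whenever $k \geq 2$; the case $k = 1$ is trivial (then $SL_{kn} = SL_n$ and there is no block decomposition), and $k = 0$ means $\lambda = 0$ so both sides of~(\ref{eq: goal iso}) are single points.

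The $j=1$ case of~(a) asserts that every matrix entry of $M \in {}^{SL_{kn}}\cW_{k\varpi_n}$, viewed as a function on $\cY^{\tau(\lambda)}_{k\varpi_n}$ with values in $\bC((t^{-1}))$, has valuation $\geq -1$.  I would now simply read off consequences from the defining shape of ${}^{SL_{kn}}\cW_{k\varpi_n}$: each entry of $b$ and $c$ lies in $t^{-2}\bC[[t^{-1}]]$, and so has valuation $\leq -2$ unless it vanishes identically.  Similarly each off-diagonal entry of $a$ lies in $t^{-2}\bC[[t^{-1}]]$.  The constraint $v \geq -1$ therefore forces all of these functions to vanish in $\bC[\cY^{\tau(\lambda)}_{k\varpi_n}]$.

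For the diagonal entries, write $a_{ii} = t^{-1} + \eta_{ii}$ with $\eta_{ii} \in t^{-2}\bC[[t^{-1}]]$.  Since $v(t^{-1}) = -1$ while $v(\eta_{ii}) \leq -2$ whenever $\eta_{ii} \neq 0$, the constraint $v(a_{ii}) \geq -1$ forces $\eta_{ii} = 0$.  Hence the matrix coefficients of $a - t^{-1} I$ all vanish on $\cY^{\tau(\lambda)}_{k\varpi_n}$.

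There is no serious obstacle here: the content of the argument is just the observation that the very first nontrivial case of~(a)---the one bounding individual matrix entries---is already sharp enough to cut out the desired relations, because the $t^{-2}$-prefactor built into the off-diagonal blocks of $\cW_{k\varpi_n}$ is incompatible with the valuation bound $-1$.  The higher-$j$ minor conditions will presumably play their role afterwards, when the isomorphism~(\ref{eq: goal iso}) is extracted by examining the data inside the $d$-block.
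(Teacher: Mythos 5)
Your proof is correct and follows essentially the same route as the paper's: both derive the lemma from the $j=1$ valuation bound of $\geq -1$, which is incompatible with the $t^{-2}$-prefactor built into $b$, $c$, and the off-diagonal/non-leading part of $a$. The edge-case remarks about $k\in\{0,1\}$ and the explicit split of $a$ into $t^{-1}$ plus a $t^{-2}\CC[[t^{-1}]]$ remainder are additions the paper leaves implicit, but they do not change the substance.
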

\begin{proof}
The $^{SL_{kn}} \cY_{k\varpi_n}^{\tau(\lambda)}$ conditions tell us in particular that the valuation of any $1\times 1$ minor must be $\geq -1$.  Since $a \in t^{-1} I + t^{-2} M_{(k-1)n \times (k-1)n}[[t^{-1}]]$, this $1\times 1$ condition is only satisfied if $a = t^{-1} I$.  Similarly for $b, c$.
\end{proof}

The next result establishes Proposition \ref{PropCalc}:
\begin{Proposition}
\label{main prop}
There is an isomorphism of schemes 
$$ ^{SL_n} \cY_0^\lambda \ \ \stackrel{\sim}{\longrightarrow} \ \ ^{SL_{kn}} \cY^{\tau(\lambda)}_{k \varpi_n} $$
defined by
$$ g \mapsto \left( \begin{array}{c|c} t^{-1}I & 0 \\ \hline 0 & t^{k-1} g \end{array} \right) $$
\end{Proposition}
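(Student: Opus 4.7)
The plan is to proceed in two main steps: first use the preceding Lemma to pin down the image of $\Phi$ set-theoretically, then verify by a direct minor computation that the defining ideals of the two schemes coincide under $\Phi^*$.

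By the Lemma, the matrix coefficients of $a - t^{-1}I$, $b$, and $c$ all vanish on $^{SL_{kn}} \cY^{\tau(\lambda)}_{k\varpi_n}$, so every element has block-diagonal form
\[
\left( \begin{array}{c|c} t^{-1}I & 0 \\ \hline 0 & d \end{array} \right),
\qquad d \in t^{k-1}I + t^{k-2} M_{n\times n}[[t^{-1}]],
\]
and the $SL_{kn}$ condition $\det = 1$ then forces $d = t^{k-1}g$ for a unique $g \in (SL_n)_1[[t^{-1}]]$. Hence $\Phi^*$ identifies the generators of $\cO({^{SL_{kn}} \cY^{\tau(\lambda)}_{k\varpi_n}})$ with those of $\cO(\cW_0)$, reducing the proposition to matching the two defining ideals inside $\cO(\cW_0)$.

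Next I would compute the $j\times j$ minors of the block matrix $\Phi(g)$. Because the off-diagonal blocks vanish and the top-left block is the scalar $t^{-1}I$, every nonzero minor uses row set $R = R_1 \sqcup R_2$ and column set $C = C_1 \sqcup C_2$ with $R_1 = C_1$, and it equals
\[
t^{-p + (k-1)i'} \cdot \det(g_{R_2, C_2}),
\]
where $p = |R_1|$, $i' = |R_2|$, and $j = p + i'$. Substituting into the defining inequalities (\ref{eq: kn presentation}) converts each case (a) and case (b) valuation condition into a lower bound on $\val(\det(g_{R_2, C_2}))$ for an $i'\times i'$ minor of $g$. In case (b) the specific choice $i' = i$, $p = kn - n$ (i.e.\ pad with the entire top block) yields exactly the bound $\val(\det(g_{R_2, C_2})) \geq -m_{i'}$, which is the $^{SL_n} \cY^\lambda_0$ condition via (\ref{eq: n presentation}). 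This already shows that the $J_0^\lambda$ generators lie in the image of the $\cY_{k\varpi_n}^{\tau(\lambda)}$ ideal under $\Phi^*$.

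The main content of the argument is the reverse containment: every other condition appearing in (\ref{eq: kn presentation}) must already be contained in the ideal generated by the $^{SL_n}\cY^\lambda_0$ conditions. At the level of vanishing of coefficient functions $\det(g_{R_2,C_2})^{(s)}$, a short computation reduces this to the numerical inequalities $m_{i'} \leq k i'$ (for case (a)) and $m_{i'} - m_i \leq k(i' - i)$ for $0 \leq i \leq i' \leq n$ (for case (b) with $i' > i$). Both follow from concavity of the sequence $(m_i)$ (a restatement of dominance of $\lambda$, with boundary values $m_0 = m_n = 0$) together with $m_1 = k$: concavity forces the successive differences $m_i - m_{i-1}$ to be non-increasing, and since the first difference is $k$, all are at most $k$. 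This is the only nontrivial step, and it is in good keeping with the fact that $k$ is chosen precisely so that $\lambda \leq kn\varpi_1$ (Lemma~\ref{lem:simpcomp}(b)). Combining the two containments yields the desired isomorphism of schemes.
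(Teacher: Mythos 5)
Your proof is correct and takes essentially the same route as the paper: compute $j\times j$ minors of the block matrix to reduce to the numerical inequalities $m_{i'}\le ki'$ and $m_{i'} - m_{i} \leq k(i'-i)$, then derive these from dominance of $\lambda$ (your concavity observation is the paper's telescoping of $m_{\ell+1}-m_\ell \le m_1 = k$). The main thing you add is spelling out the converse direction --- using the Lemma to force the block-diagonal shape and choosing $p = kn-n$, $i'=i$ to recover the $\cY_0^\lambda$ condition exactly --- which the paper compresses to ``the converse is similar.''
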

\begin{proof}
We will show that the $^{SL_n} \cY_0^\lambda$ conditions on $g$ imply the $^{SL_{kn}}\cY_{k\varpi_n}^{\tau(\lambda)}$ conditions on its image.  The converse is similar.

Consider a $j\times j$ minor of the image of $g$.  To be nonzero, it must correspond to an $i\times i$ minor of $t^{k-1} g$ times a $(j-i)\times(j-i)$ minor of $t^{-1} I$. In other words, if nonzero, its valuation has the form
$$ i(k-1) + \operatorname{val}(\Delta) - (j-i) = \operatorname{val}(\Delta) + ki-j$$
where $\Delta$ is an $i\times i$ minor of $g$. By the $^{SL_n} \cY_0^\lambda$ condition on $\operatorname{val}(\Delta)$, this is greater than or equal to
\begin{equation}
\label{eq: minor 1}
- m_i + ki -j
\end{equation}
We consider the cases $j\leq kn-n$ and $j > kn-n$ separately, as in (\ref{eq: kn presentation}).

If $j\leq kn-n$, then we must show that
$$ -m_i + ki - j \geq -j $$
Recalling that $kn \varpi_1 \geq \lambda$, the above follows from the inequality
$$ki = \langle kn \varpi_1, \varpi_{n-i}^\vee \rangle \geq \langle \lambda, \varpi_{n-i}^\vee\rangle = m_i$$

If $j > kn -n$, then we must show that
$$ -m_i  +ki -j \geq - m_{j-kn+n} - (kn-j)(k-1) $$
or equivalently
$$ m_i - m_{j-kn+n} \leq k \big( i - (j-kn+n)\big) $$
Note that $j-i \leq kn-n$, so $j -kn +n \leq i$.  Since $\lambda$ is dominant, it follows that $m_{\ell +1} - m_\ell \leq m_1$ for all $\ell$ (e.g. if we think of $\lambda = (p_1 \geq \ldots \geq p_n)$ with $\sum p_i = 0$, then $p_{n-\ell} = m_\ell - m_{\ell+1}$ and $p_n = -m_1$).  Since $m_i - m_{j-kn+n}$ is a telescoping sum of terms $m_{\ell+1} - m_\ell$, and since $m_1 = k$, the inequality follows.

In either case, we see that the $^{SL_{kn}} \cY_{k\varpi_n}^{\tau(\lambda)}$ conditions hold on the image of $g$, as claimed.
\end{proof}


\section{Connection to quiver gauge theories}
\subsection{A general isomorphism between slices}
\label{sec:bfn}
We've now shown that $\cY_\mu^\lambda$ is reduced in type A and hence is isomorphic to $\Gr_\mu^\lambda$.  In particular Proposition \ref{PropCalc} now says that we have an isomorphsm
\begin{align}
  \label{eq:19}
  \leftexp{SL_n}{\Gr^\lambda_0} \cong \leftexp{SL_{kn}}{\Gr^{\tau(\lambda)}_{k\varpi_n}}
\end{align}
In this section we show that this isomorphism has a natural interpretation in the context of Coulomb branches of quiver gauge theories, based on their description by Braverman, Finkelberg and Nakajima \cite{BFN2}.

We'll work more generally, for $G$ an arbitrary simply-laced semisimple group of Dynkin type $I$.  Consider $\lambda \geq \mu$ dominant $G$-coweights, and as per usual we denote $\lambda_i = \langle \lambda, \alpha_i^\ast\rangle$, $m_i = \langle \lambda-\mu, \varpi_i^\ast\rangle$.   Consider the vector spaces $ W_i = \CC^{\lambda_i}$ and $V_i = \CC^{m_i}$ for $i\in I$, and the group $\bfnG := \prod_{i\in I } GL(V_i)$. Fix an orientation $\Omega$ of the Dynkin diagram $I$, and define
\begin{align}
  \label{eq:21}
  \bfnN := \bigoplus_{i\rightarrow j\in \Omega} \hom(V_i, V_j) \oplus \bigoplus_{i\in I} \hom(W_i, V_i),
\end{align}
which is naturally a representation of $\bfnG$.

To this data there is an associated commutative algebra $\cA = \cA_{\bfnG, \bfnN}$, which is a graded Poisson algebra, arising as a special case of the general construction \cite[Section 3(iv)]{BFN1} (see also \cite[Section 3(iii)]{BFN2}).  Consider
\begin{align}
  \label{eq:22}
   \cM_C := \Spec  \cA
\end{align}
This is proposed as a mathematical definition of the Coulomb branch associated to a $3d$ $\cN =4 $ quiver gauge theory.

\begin{Theorem}[\mbox{\cite[Theorem 3.10]{BFN2}}]
\label{Theorem: BFN theorem}
For any dominant coweights $\lambda \geq \mu$ there is an isomorphism of Poisson varieties
\begin{align}
	\label{eq:23}
\Gr_{\mu}^{\lambda} \cong \cM_C
\end{align}

\end{Theorem}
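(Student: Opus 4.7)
The plan is to lift the desired isomorphism to the quantum (noncommutative) level and then specialize back to the classical Poisson setting. First I would construct a quantization $\cA_\hbar$ of the Coulomb branch algebra $\cA$ by replacing the $\bfnG_\cO$-equivariant Borel--Moore homology of the variety of triples $\cR_{\bfnG, \bfnN}$ with its loop-rotation equivariant counterpart; specializing the loop-rotation parameter $\hbar$ to zero recovers $\cA$ together with its Poisson bracket. On the slice side, from Section \ref{intro: truncated shifted Yangians} we already have a natural candidate quantization of $\Gr_\mu^\lambda$, namely the truncated shifted Yangian $Y_\mu^\lambda$ (or, unconditionally, $\im(\Psi_\mu^\lambda)$).

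Next I would construct a homomorphism $\Phi_\hbar : Y_\mu \to \cA_\hbar$ from the shifted Yangian by matching generators: Cartan generators should go to tautological Chern classes of the bundles $V_i$ on the triples space, and raising/lowering generators $e_i^{(s)}, f_i^{(s)}$ to fundamental classes of natural one-parameter families of triples realizing the simple coroots $\alpha_i^\vee$. Verifying the shifted Yangian relations is best organized by factoring through an abelianized Coulomb branch, in which $\bfnG$ is replaced by its maximal torus and $\cR_{\bfnG, \bfnN}$ by its abelian analogue; there the relations reduce to manipulations of symmetric functions and Weyl-invariants.

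Then I would show that $\Phi_\hbar$ kills the relations defining the truncation, so that it descends to a map $\overline{\Phi}_\hbar : Y_\mu^\lambda \to \cA_\hbar$. This step uses the framing dimensions $\dim W_i = \lambda_i$ in an essential way, since these govern the pole orders imposed by the geometry of the triples space. Surjectivity of $\overline{\Phi}_\hbar$ follows formally from the generator matching; once injectivity is established, taking associated graded with respect to $\hbar$ yields a Poisson isomorphism $\cO(\Gr_\mu^\lambda) \cong \cA$, which on spectra is exactly the stated isomorphism of Poisson varieties.

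The hard part is injectivity of $\overline{\Phi}_\hbar$: this is a genuine size comparison between two algebras given implicitly by generators and relations. The standard tool is the monopole formula, which computes a Hilbert series of $\cA$ with respect to a natural bigrading; this must be matched against a PBW-type character of $Y_\mu^\lambda$. Once these characters agree the surjection is forced to be an isomorphism, and the theorem follows by passing to the classical limit.
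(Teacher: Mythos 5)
This Theorem is a citation of an external result, \cite[Theorem~3.10]{BFN2}; the paper offers no proof of it and uses it as a black box (it appears only to motivate the explicit isomorphism of Proposition~\ref{PropCalc}, and as input to Proposition~\ref{Proposition: Isomorphism via BFN}). So there is no in-paper proof to compare against. Still, the route you sketch is not the one in \cite{BFN2}, and it contains a circularity that is worth spelling out.

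What you describe is essentially the argument of \cite[Appendix~B]{BFN1}: realize $\cA_\hbar$ via loop-rotation equivariant Borel--Moore homology of the space of triples, build a map $Y_\mu \to \cA_\hbar$ by passing through the abelianized Coulomb branch, check that the truncation generators $A_i^{(s)}$, $s>m_i$, are killed, and then compare sizes. The problem is the size comparison. You propose matching a ``PBW-type character of $Y_\mu^\lambda$'' against the monopole formula for $\cA$; but an a priori character formula for $Y_\mu^\lambda$ is exactly what is \emph{not} available. The algebra $Y_\mu^\lambda$ is cut out by explicit generators of a two-sided ideal, and whether those generators cut out an ideal of the ``right size'' is precisely the content of the reducedness conjecture, i.e.\ the main theorem of this paper. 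In the paper's own diagram~\eqref{yangians diagram}, the surjection $Y_\mu^\lambda \twoheadrightarrow \im(\Psi_\mu^\lambda) \cong \cA_\hbar$ is the map whose injectivity is at stake, and it is a \emph{consequence} of Theorem~\ref{MainThm}, not an input to it. Invoking it inside a proof of Theorem~\ref{Theorem: BFN theorem} would be circular.

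Note also that what \cite[Appendix~B]{BFN1} actually proves unconditionally is $\im(\Psi_\mu^\lambda) \cong \cA_\hbar$, realized as two subalgebras of a common localized difference-operator algebra, so no character count of $Y_\mu^\lambda$ is needed there. Even granting that, one does not obtain Theorem~\ref{Theorem: BFN theorem}: taking $\hbar \to 0$ gives $\gr\im(\Psi_\mu^\lambda) \cong \cA$, and identifying $\gr\im(\Psi_\mu^\lambda)$ with the reduced coordinate ring $\cO(\Gr_\mu^\lambda)$ is again the reducedness conjecture in another guise. The argument in \cite{BFN2} for Theorem~3.10 is instead classical and geometric --- a morphism $\cM_C \to \cW_\mu^\lambda$ is constructed directly from minuscule monopole operators and generalized minors, shown to be birational using the abelianization, and then promoted to an isomorphism using normality of the Coulomb branch --- and it does not pass through Yangians. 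If you want the quantum statement, cite \cite[Appendix~B]{BFN1}; if you want this Poisson-variety statement, cite \cite[Theorem~3.10]{BFN2}; neither should be reproved here, and the Yangian-character route you outline cannot be made to work without first proving the theorem this paper is about.
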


Let $\tau: I \hookrightarrow J$ be an inclusion of (simply-laced) Dynkin diagrams. For $j\in J$, consider the vector spaces
\begin{align}
	\label{eq:24}
\widetilde{W}_j = \left\{ \begin{matrix} W_i, & j = \tau(i), \\ 0, & \text{else} \end{matrix} \right., \qquad \widetilde{V}_j = \left\{ \begin{matrix} V_i, & j = \tau(i), \\ 0, & \text{else} \end{matrix} \right.
\end{align}
as well as the associated coweights $ \widetilde{\lambda}, \widetilde{\mu}$. 

Fix an orientation of $J$ extending that of $I$.  Define the group $\widetilde{\bfnG} := \prod_{j\in J} GL(\widetilde{V}_j) $, its representation $\widetilde{\bfnN}$ analogous to $\bfnN$ above, and the corresponding algebra $\widetilde{\cA}$.  In other words, we are extending our data from the quiver of $I$ to that of $J$ by ``padding by zero''.

Clearly, there are compatible isomorphisms $\bfnG \cong \widetilde{\bfnG}$ and $\bfnN \cong \widetilde{\bfnN}$. From the definitions, there is therefore an isomorphism of graded algebras
\begin{align}
	\label{eq:25}
\cA \cong \widetilde{\cA}
\end{align}
In fact this is an isomorphism of graded Poisson algebras, since it lifts to an isomorphism of their deformations $\cA_\hbar \cong \widetilde{\cA}_\hbar$ (these deformations are defined as in \cite[Section 3(vi)]{BFN1}).  By applying Theorem \ref{Theorem: BFN theorem}, we get:

\begin{Proposition}
\label{Proposition: Isomorphism via BFN}
The isomorphism (\ref{eq:25}) induces a Poisson isomorphism
\begin{align}
	\label{eq:26}
\leftexp{I}{\Gr^\lambda_\mu} \cong \leftexp{J}{\Gr^{\widetilde{\lambda}}}_{\widetilde{\mu}}
\end{align}
\end{Proposition}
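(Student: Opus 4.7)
The plan is to recognize this proposition as an essentially formal consequence of the BFN construction, once one observes that ``padding the quiver by zero'' genuinely produces the same gauge theory data. The first step is to unpack the definitions: since $\widetilde{V}_j = 0 $ and $\widetilde{W}_j = 0$ whenever $j \notin \tau(I)$, the factors $GL(\widetilde{V}_j)$ for such $j$ are trivial, yielding a canonical isomorphism $\widetilde{\bfnG} \cong \bfnG$. For the representation, any summand of $\widetilde{\bfnN}$ involving a node outside $\tau(I)$ is automatically zero; the remaining summands come from arrows $j \to j'$ of $J$ with both endpoints in $\tau(I)$, or from $\widetilde{W}_j \to \widetilde{V}_j$ with $j\in \tau(I)$. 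The crucial point is that $\tau$ is an inclusion of Dynkin diagrams, i.e.\ $\tau(I)$ is a full subdiagram of $J$, so every edge of $J$ whose endpoints are both in $\tau(I)$ is the image of an edge of $I$. Consequently $\widetilde{\bfnN} \cong \bfnN$ as a representation of $\widetilde{\bfnG} \cong \bfnG$.

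Next, I would invoke the functoriality of the BFN construction: since $(\bfnG, \bfnN)$ and $(\widetilde{\bfnG}, \widetilde{\bfnN})$ are isomorphic as gauge-theoretic data, the Coulomb branch algebras $\cA$ and $\widetilde{\cA}$ are isomorphic as graded commutative algebras, which is precisely the statement (\ref{eq:25}). To upgrade this to a Poisson isomorphism, I would lift to the quantized Coulomb branch algebras $\cA_\hbar$ and $\widetilde{\cA}_\hbar$ constructed as in \cite[Section 3(vi)]{BFN1}: the same identification of data gives an isomorphism $\cA_\hbar \cong \widetilde{\cA}_\hbar$ of filtered associative algebras, and passing to the $\hbar \to 0$ limit recovers (\ref{eq:25}) together with its Poisson structure.

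Finally, I would apply Theorem \ref{Theorem: BFN theorem} on both sides: it identifies $\Spec \cA$ with $\leftexp{I}{\Gr^\lambda_\mu}$ as Poisson varieties (using the data $W_i, V_i$ attached to the diagram $I$), and identifies $\Spec \widetilde{\cA}$ with $\leftexp{J}{\Gr^{\widetilde{\lambda}}_{\widetilde{\mu}}}$ (using the padded data on $J$, which by construction encodes the coweights $\widetilde{\lambda}, \widetilde{\mu}$). Composing these with the Poisson isomorphism $\Spec \cA \cong \Spec \widetilde{\cA}$ coming from (\ref{eq:25}) yields the desired Poisson isomorphism (\ref{eq:26}).

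There is no real obstacle here; the entire argument is a book-keeping exercise built on top of the BFN theorem. The only subtle point is verifying that $\tau$ being a subdiagram inclusion (rather than an arbitrary injective map of vertex sets) is what guarantees the match $\widetilde{\bfnN} \cong \bfnN$ — if $J$ contained an extra edge between two vertices of $\tau(I)$, the representations would differ and the argument would fail. Beyond this, one should double check that the chosen orientation of $J$ extending that of $I$ does not affect the Coulomb branch (which is standard for BFN Coulomb branches of quiver gauge theories).
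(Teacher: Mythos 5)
Your argument is correct and follows the same route as the paper: identify the padded gauge-theoretic data $(\widetilde{\bfnG},\widetilde{\bfnN})$ with $(\bfnG,\bfnN)$, deduce an isomorphism $\cA\cong\widetilde\cA$ upgraded to a Poisson isomorphism via the lift $\cA_\hbar\cong\widetilde\cA_\hbar$, and then apply Theorem \ref{Theorem: BFN theorem} to both sides. You merely spell out in more detail the paper's ``Clearly, there are compatible isomorphisms'' step — in particular your observation that $\tau$ being a \emph{full} subdiagram inclusion is what ensures no stray edges of $J$ contribute to $\widetilde{\bfnN}$, a point the paper leaves implicit in the phrase ``inclusion of Dynkin diagrams.''
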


\begin{Remark}
\label{rmk: comparing with bfn}
Consider the inclusion $\tau$ of $I= \{1,\ldots, n-1\}$ into $J =\{1,\ldots, kn-1\}$, defined by $\tau(i) = kn-n+i$ (this agrees with our previous conventions, up to twisting by the longest elements of the symmetric groups $S_n$ and $S_{kn}$). For the dimension vectors on $I$ corresponding to $\lambda \geq \mu =0$, we recover the isomorphism (\ref{eq:19}): this follows from the decription of Theorem \ref{Theorem: BFN theorem} in terms of generalized minors given in \cite[Appendix B]{BFN2}.  
\end{Remark}

\begin{eg}
As a variation on this construction, consider a slice of the form $\Gr_{\lambda - \alpha_i}^\lambda$.  On the one hand, it was shown in \cite[Example 2.2]{KWWY} that this variety is Poisson isomorphic to the Kleinian singularity $\CC^2 / (\ZZ / n) $, where $n = \langle \lambda, \alpha_{i^\ast}^\vee \rangle $.   On the other hand, the quiver corresponding to $\Gr_{\lambda - \alpha_i}^\lambda$ has $V_{i^\ast} = \CC$ and $V_{j^\ast} = 0$ for $j\neq i$, so we have $\bfnG = GL(1)$ and $ \bfnN = \hom(\CC^n, \CC) $.  In particular, there is an isomorphism to data corresponding to the Dynkin diagram of $SL(2)$, explaining the above ubiquitous appearance of Kleinian singularities.
\end{eg}

\subsection{An isomorphism of truncated shifted Yangians}
\label{sec:truncatediso}
In this section we'll explicitly describe the quantum analog of the isomorphism of Proposition \ref{Proposition: Isomorphism via BFN}, via algebras $^I Y_\mu^\lambda$ defined from Yangians (c.f. Section \ref{intro: truncated shifted Yangians}).  

In fact there is a family $^I Y_\mu^\lambda(\bc)$ of such algebras, where $\bc$ are certain parameters.  We refer the reader to \cite[Section 3]{KTWWY} for the precise definition of $^IY_\mu^\lambda(\bc)$; we'll only recall the parts of the definition we'll need.   The Yangian $^IY$ has a presentation with generators $F_i^{(r)},H_i^{(r)}$, and $E_i^{(r)}$, where $r=1,2...$ and $i\in I$.  The shifted Yangian $^IY_\mu \subset ^IY$ is the subalgebra generated by all $H_i^{(r)}$ and $E_i^{(r)}$, and the $F_i^{(s)}$ such that $s>\langle \mu^*,\alpha_i\rangle$.  Note that $^I Y$ and $^I Y_\mu$ can also be defined as $\CC[\hbar]$--algebras, and we are working with the specialization $\hbar =1$.

Now we fix $\bc=(\bc_i)_{i\in I}$, where each $\bc_i$ is a multiset of complex numbers with $|\bc_i| = \lambda_i$.  From this data we define series $r_i(u)$ (see \cite[Section 3.2]{KTWWY}) and new Cartan generators $A_i^{(s)}$ using the formula
$$
H_i(u)=r_i(u)\frac{\prod_{j\sim i}A_j(u-\frac{1}{2})}{A_i(u)A_i(u-1)},
$$  
where $H_i(u)=1+\sum_{s>0}H_i^{(s)}u^{-s}$ and $A_i(u)=1+\sum_{s>0}A_i^{(s)}u^{-s}$.  Then $^IY_\mu^\lambda(\bc)$ is the quotient of $Y_\mu$ by the two-sided ideal generated by $A_i^{(s)}$ where $i\in I$ and $s>m_i$.  

Let $\tau(\bc)$ be the collection of multisets indexed by $J$, where $\tau(\bc)_{\tau(i)}=\bc_i$ and otherwise $\tau(\bc)_{j}$ is empty if $j \notin \tau(I)$.

\begin{Proposition}
\label{Prop: isomorphism of Yangians}
We have an isomorphism of algebras
\[
^IY_\mu^\lambda(\bc) \cong {^JY_{\widetilde{\mu}}^{\tau(\lambda)}(\tau(\bc))}.
\]
\end{Proposition}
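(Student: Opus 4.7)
The plan is to construct a homomorphism $\Phi: {^IY_\mu^\lambda(\bc)} \to {^JY_{\widetilde{\mu}}^{\tau(\lambda)}(\tau(\bc))}$ by sending each generator $E_i^{(r)}, F_i^{(s)}, A_i^{(r)}$ (for $i \in I$) to the corresponding generator indexed by $\tau(i)$, and then verifying that it is bijective.  First I would check that this defines a well-defined map of shifted Yangians ${^IY_\mu} \to {^JY_{\widetilde{\mu}}}$: the defining relations only involve pairs of indices and depend on the Dynkin diagram $I$ only through whether such pairs are equal, adjacent, or distant.  Since $\tau$ embeds $I$ as a full subdiagram of $J$, these relationships are preserved.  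The shift conditions on $F$-generators also match, because $\widetilde{\mu}$ agrees with $\mu$ on $\tau(I)$, so $\langle \widetilde{\mu}^*, \alpha_{\tau(i)}\rangle = \langle \mu^*, \alpha_i \rangle$.

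Next, I would verify that $\Phi$ descends to the truncated quotients.  Since $\widetilde{\lambda}_{\tau(i)} = \lambda_i$ and $\widetilde{\mu}_{\tau(i)} = \mu_i$, we have $\widetilde{m}_{\tau(i)} = m_i$, so the relations $A_i^{(s)} = 0$ (for $s > m_i$) imposed in the source map to relations already imposed in the target.  Because $\tau(\bc)_{\tau(i)} = \bc_i$ by definition, the series $r_i(u)$ transports correctly to $r_{\tau(i)}(u)$, ensuring the formula relating $H_i$ and $A_i$ is compatible with the corresponding formula for $H_{\tau(i)}$ and $A_{\tau(i)}$.  For surjectivity, the nontrivial content concerns the generators $E_j^{(r)}, F_j^{(s)}, A_j^{(r)}$ of the target for $j \in J \setminus \tau(I)$.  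For these nodes we have $\widetilde{m}_j = 0$ and $\widetilde{\lambda}_j = 0$, forcing $A_j(u) = 1$ and $r_j(u) = 1$; consequently $H_j(u) = \prod_{k \sim j} A_k(u - \tfrac{1}{2})$ is expressed entirely in terms of Cartan series at neighboring nodes, all of which lie in the image.

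I expect the main obstacle to be showing that the extra $E_j^{(r)}$ and $F_j^{(s)}$ for $j \notin \tau(I)$ are in the image, and the dual question of injectivity.  The cleanest route would be to invoke the identification of both truncated shifted Yangians with their respective quantized Coulomb branches $\cA_\hbar$ and $\widetilde{\cA}_\hbar$ via \cite[Appendix B]{BFN1}, and then to match up $\Phi$ with the quantum analog of the padding isomorphism $\cA_\hbar \cong \widetilde{\cA}_\hbar$ established in Section \ref{sec:bfn}; this viewpoint makes it manifest that the extra $E_j, F_j$ are pinned down by the image of $\Phi$.  A more direct, purely Yangian-theoretic argument should also be possible by exploiting the truncated relations once $A_j(u) = 1$ and $r_j(u) = 1$ are imposed, but would likely require substantial calculation.
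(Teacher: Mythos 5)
Your plan matches the paper's proof in outline — define the map $X_i^{(r)} \mapsto X_{\tau(i)}^{(r)}$, check that the shift and truncation conditions transport correctly (the identity $\langle\widetilde\mu, \alpha_{\tau(\ell)}\rangle = \langle\mu,\alpha_\ell\rangle$, the equality $\tau(\bc)_{\tau(i)} = \bc_i$, and $\widetilde m_{\tau(i)} = m_i$), and then worry about surjectivity and injectivity. However, you have left the decisive step open, and you have slightly misidentified what that step should prove. The claim is not merely that the extra generators at $j\in J\setminus\tau(I)$ ``lie in the image'' or that $H_j(u)$ is determined by neighboring Cartan series — the paper proves that they \emph{vanish} in $^JY_{\widetilde\mu}^{\tau(\lambda)}(\tau(\bc))$: once the truncation forces $A_j(u) = 1$ for $j\notin\tau(I)$, the relation $(u-v)[A_j(u),E_j(v)] = A_j(u)(E_j(u)-E_j(v))$ gives $E_j(u) - E_j(v) = 0$, and since $E_j(u)$ is a power series in $u^{-1}$ with no constant term this forces $E_j(u) = 0$; similarly $F_j(u) = 0$; and then $[E_j^{(r)},F_j^{(s)}] = H_j^{(r+s-1)}$ kills all $H_j^{(m)}$ with $m\geq 1$. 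Contrary to your expectation, this is not a ``substantial calculation'' — it is three lines once you know which relations to look at. Vanishing (rather than mere expressibility) is what makes both surjectivity \emph{and} the construction of the inverse map easy: the inverse $X_{\tau(i)}^{(r)}\mapsto X_i^{(r)}$ is well-defined because any relation involving a $j\in J\setminus\tau(I)$ has already become trivial.

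Your proposed alternative route through the Coulomb branch identification from \cite[Appendix B]{BFN1} and the padding isomorphism $\cA_\hbar\cong\widetilde{\cA}_\hbar$ is not the paper's argument and is less self-contained than it may appear. First, to conclude that $^IY_\mu^\lambda(\bc)\cong\cA_{\hbar,\bc}$ (rather than merely a surjection onto it) one needs the reducedness theorem, so care would be required to avoid circularity with the rest of the paper's logic. Second, the paper's Remark following this proposition points out that matching $\Phi$ against the quantum padding isomorphism requires nontrivial bookkeeping: twisting by the longest elements of the relevant symmetric groups and keeping track of the flavour parameters $\bc$ versus $\tau(\bc)$. The paper therefore treats the Coulomb-branch compatibility as a consequence of the directly proven Proposition, not as a tool to prove it.
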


\begin{proof}
Consider the inclusion of algebras $^IY \hookrightarrow ^JY$ defined by $X_i^{(r)} \mapsto X_{\tau(i)}^{(r)}$, for $X=F,H,E$.
For any $\ell \in I$
\begin{align*}
\langle \widetilde{\mu} , \alpha_{\tau(\ell)} \rangle &= \langle \tau(\lambda),\alpha_{\tau(\ell)} \rangle -\sum_{i\in I}m_i  \langle \alpha_{\tau(i)},\alpha_{\tau(\ell)} \rangle \\
&= \lambda_\ell -\sum_{i \in I} m_ia_{i\ell} \\ &= \langle \mu, \alpha_\ell \rangle.
\end{align*}
Therefore for any $F_\ell^{(r)} \in ^IY_\mu$ we have that $F_{\tau(\ell)}^{(r)} \in ^JY_{\widetilde{\mu}}$, and hence $^IY_\mu\hookrightarrow ^JY_{\widetilde{\mu}}$.  

Define $\mathcal{I}_{\widetilde{\mu}}^{\tau(\lambda)}$ to be  the two sided ideal in $^JY_{\widetilde{\mu}}$ generated by 
\begin{align*}
&A_{\tau(i)}^{(s)} \quad s>m_i, i\in I,\\ \quad &A_j^{(r)} \quad r>0, j\in J\setminus \tau(I).
\end{align*}
By construction we have the s.e.s.
$$
0 \longrightarrow \mathcal{I}_{\widetilde{\mu}}^{\tau(\lambda)} \longrightarrow ^JY_{\widetilde{\mu}} \longrightarrow ^JY_{\widetilde{\mu}}^{\tau(\lambda)}(\tau(\bc)) \longrightarrow 0. 
$$
Composing with the inclusion of the previous paragraph we obtain a map
\[
\varphi: ^IY_\mu\longrightarrow ^JY_{\widetilde{\mu}}^{\tau(\lambda)}(\tau(\bc)).
\]

We make some observations about $\varphi$.  First, $\varphi(A_i^{(r)})=A_{\tau(i)}^{(r)}$.  Indeed, in $^JY_{\widetilde{\mu}}^{\tau(\lambda)}(\tau(\bc))$ we have $A_j(u)=1$ for $j\in J\setminus \tau(I)$.  Hence for any $i\in I$ the following equality in $^JY_{\widetilde{\mu}}^{\tau(\lambda)}(\tau(\bc))$ holds:
\[
H_{\tau(i)}(u)=r_{\tau(i)}(u)\frac{\prod_{\ell \sim i}A_{\tau(\ell)}(u-\frac{1}{2})}{A_{\tau(i)}(u)A_{\tau(i)}(u-1)}
\]
Since $r_{\tau(i)}(u)=r_i(u)$ and $\varphi(H_i(u))=H_{\tau(i)}(u)$, this implies that $\varphi(A_i(u))=A_{\tau(i)}(u)$.  

Since $^IY_\mu^\lambda(\bc)$ is the quotient of $^IY_\mu$ by the ideal generated by 
\[
A_{i}^{(s)} \quad s>m_i, i\in I,
\]
$\varphi$ factors through a map $\varphi':{^IY_\mu^\lambda(\bc)} \longrightarrow ^JY_{\widetilde{\mu}}^{\tau(\lambda)}(\tau(\bc))$.

Second, $\varphi$ is surjective.  To see this first note that in $^JY_{\widetilde{\mu}}^{\tau(\lambda)}(\tau(\bc))$, we have $X_j^{(r)}=0$ for any $r>0, j\in J\setminus I$, and $X=F,H,E$.  This follows since $A_j(u)=1$ and the relations
\begin{align*}
(u-v)[A_j(u),E_j(v)]&=A_j(u)(E_j(u)-E_j(v)), \\
(u-v)[A_j(u),F_j(v)]&=(F_j(u)-F_j(v))A_j(u), \\
[E_j^{(r)},F_j^{(s)}]&=H_j^{(r+s-1)}.
\end{align*}
Therefore $^JY_{\widetilde{\mu}}^{\tau(\lambda)}(\tau(\bc))$ is generated by the $X_{\tau(i)}^{(r)}$, $i\in I$, and these generators are all in the image of $\varphi$.

It follows that $\varphi'$ is also surjective.  We can define a map in the opposite direction by declaring $X_{\tau(i)}^{(r)} \mapsto X_i^{(r)}$ for any $i\in I, r>0$.  This defines an algebra homomorphism.  Indeed any relation only involving generators over $\tau(i)$'s gets mapped to the same relation only involving generators over $i$'s.  Moreover, any relation involving generators over $j$'s, where $j\in J\setminus \tau(I)$, is already zero in $^JY_{\widetilde{\mu}}^{\tau(\lambda)}(\tau(\bc))$.  Finally, as we saw above, this identifies $A_{\tau(i)}(u)$ with $A_{i}(u)$.  Since this map is manifestly surjective it defines an inverse to $\varphi'$.
\end{proof}

\begin{Remark}
With notation as Section \ref{sec:bfn}, there is a deformation of the (quantum) Coulomb branch using the ``flavour symmetry'' group $\bfnG_F := \prod_i GL(W_i)$ \cite[Section 3(v)]{BFN2}.  The data $\bc$ defines a specialization $H^\ast_{\bfnG_F\times \CC^\times}(pt)\rightarrow \CC$ (we also specialize $\hbar = 1$), and a corresponding specialized algebra $\cA_{\hbar = 1,\bc}$. There is a surjection $^I Y_\mu^\lambda(\bc) \twoheadrightarrow \cA_{\hbar = 1, \bc}$ by \cite[Appendix B]{BFN2}. Appropriately twisting data by factors of $w_0$ as in Remark \ref{rmk: comparing with bfn}, one can verify that this surjection intertwines the isomorphism from Proposition \ref{Prop: isomorphism of Yangians} with an isomorphism  $\cA_{\hbar=1,\bc} \cong \widetilde{\cA}_{\hbar=1, \tau(\bc)}$ analogous to Proposition \ref{Proposition: Isomorphism via BFN}
\end{Remark}


\end{document}